\numberwithin{equation}{section}%
\newcommand{\Z}{\mathbb{Z}}
\newcommand{\C}{\mathbb{C}}
\newcommand{\R}{\mathbb{R}}
\DeclareMathOperator{\E}{\mathbb{E}}
\renewcommand{\i}{\mathbf{i}}
\newcommand{\al}{\alpha}
\newcommand{\la}{\lambda}
\newcommand{\x}{\mathbf{x}}
\newcommand{\y}{\mathbf{y}}
\newcommand{\n}{\mathbf{n}}
\newcommand{\av}{\mathbf{a}}
\newcommand{\gen}{\mathscr{L}}
\newcommand{\RR}{\mathsf{R}}
\newcommand{\LL}{\mathsf{L}}
\newcommand{\gap}[1]{\ensuremath{\mathrm{gap}_{#1}}}
\newcommand{\W}{\mathbb{W}}
\DeclareMathOperator{\prob}{\mathrm{Prob}} 
\newcommand{\integral}{cumulative}
\newtheorem{proposition}{Proposition}[section]
\newtheorem{lemma}[proposition]{Lemma}
\newtheorem{corollary}[proposition]{Corollary}
\newtheorem{theorem}[proposition]{Theorem}
\newtheorem{claim}[proposition]{Conjecture}
\theoremstyle{definition}
\newtheorem{definition}[proposition]{Definition}
\newtheorem{remark}[proposition]{Remark}
\begin{document}
\title[The $q$-P\lowercase{ush}ASEP]{The $q$-P\lowercase{ush}ASEP: a new integrable model\\ for traffic in 1+1 dimension}

\author[I. Corwin]{Ivan Corwin}
\address{I. Corwin, Columbia University,
Department of Mathematics,
2990 Broadway,
New York, NY 10027, USA,
and Clay Mathematics Institute, 10 Memorial Blvd. Suite 902, Providence, RI 02903, USA,
and Massachusetts Institute of Technology,
Department of Mathematics,
77 Massachusetts Avenue, Cambridge, MA 02139-4307, USA}
\email{icorwin@mit.edu}

\author[L. Petrov]{Leonid Petrov}
\address{L. Petrov,
Department of Mathematics, Northeastern University, 360 Huntington ave., Boston, MA 02115, USA\newline
Institute for Information Transmission Problems, Bolshoy Karetny per. 19, Moscow, 127994, Russia}
\email{lenia.petrov@gmail.com}

\date{}

\maketitle

\begin{abstract}
	We introduce a new interacting (stochastic) particle system 
	$q$-PushASEP
	which interpolates between the
	\mbox{$q$-TASEP} of \cite{BorodinCorwin2011Macdonald}
	(see also \cite{BorodinCorwinSasamoto2012}, 
	\cite{BorodinCorwin2013discrete}, \cite{OConnellPei2012}, \cite{BorodinCorwinPetrovSasamoto2013})
	and the $q$-PushTASEP introduced recently 
	\cite{BorodinPetrov2013NN}. 
	In the $q$-PushASEP, particles 
	can jump to the left or to the right, and 
	there is a certain partially asymmetric pushing mechanism
	present. This particle system 
	has a nice interpretation
	as a model of 
	traffic on a one-lane highway.

	Using the quantum many body system approach, we explicitly compute the expectations of a large family
	of observables for this system
	in terms of nested contour integrals.
	We also discuss relevant Fredholm determinantal formulas
	for the distribution of the location of each particle,
	and connections of the model
	with a certain two-sided version of Macdonald processes
	and with the semi-discrete stochastic heat equation.
\end{abstract}

\section{Introduction and main results} 
\label{sec:introduction}

\subsection{Definition of the process} 
\label{sub:definition_of_the_process}

The $N$-particle \emph{$q$-PushASEP} 
($q$-deformed pushing asymmetric simple exclusion process)
is a continuous-time interacting particle system with
the state space consisting of 
ordered configurations 
$x_1>x_2>\ldots>x_N$, $x_i\in\Z$ (we assume that $N\ge1$ is fixed).
For convenience, we add two ``virtual'' particles 
$x_0=+\infty$ and $x_{N+1}=-\infty$,
and denote the state of the system
as
\begin{align}\label{space_X}
	X^N:=\Big\{\x=(-\infty=x_{N+1}<x_N<\ldots<x_2<x_1<x_0=+\infty)\colon 
	x_1,\ldots,x_N\in\Z\Big\}.
\end{align}
Let us also denote by
$\gap{i}:=x_{i-1}-x_i-1$ the 
$i$th gap between the particles.
{Throughout the paper, 
$q$ is a parameter belonging to
$(0,1)$.}

The dynamics of $q$-PushASEP 
$\{\x(t)\}_{t\ge0}$ depend on 
positive
parameters 
$a_1,\ldots,a_N$ and also on $\RR,\LL\ge0$ such that $\RR$ and $\LL$ are not simultaneously zero. 
It is described as follows (see Figure \ref{fig:qpushASEP}):
\begin{enumerate}[$\bullet$]
	\item (\emph{right jumps}) 
	Each particle $x_i(t)$, $1\le i\le N$, 
	jumps to the right by one 
	(i.e., instantaneously moves to position $x_i(t)+1$)
	at rate $a_i\RR\big(1-q^{\gap i(t)}\big)$, 
	independently of other particles.
	The jump rate of $x_i(t)$ vanishes if $\gap i(t)=0$, 
	which means that a particle cannot jump
	onto a site which is already occupied 
	(this is the \emph{exclusion mechanism}).

	\item (\emph{left jumps}) Each particle $x_i(t)$, $1\le i\le N$,
	jumps to the left by one
	(i.e., moves to position $x_i(t)-1$)
	at rate $a_i^{-1}\LL$,
	independently of other particles.
	There is also a \emph{mechanism of instantaneous pushes}
	present in left jumps.
	Namely, 
	if any particle
	$x_j(t)$ has moved to the left, 
	i.e., if $x_j(t+dt)=x_j(t)-1$, then
	$x_j(t)$ has a chance 
	to instantaneously (long-range)
	push its left neighbor 
	$x_{j+1}(t)$ to the left by one
	with probability $q^{\gap{j+1}(t)}$.
	If particle $x_{j+1}(t)$ is pushed then it 
	also has the possibility to 
	push its own left neighbor $x_{j+2}(t)$, and so on.
	When $\gap{j+1}(t)=0$, the probability of a push 
	becomes one, which means that a 
	particle moving to the left 
	always pushes
	a (possibly empty) cluster of
	its \emph{immediate} left neighbors.
\end{enumerate}
\begin{figure}[htbp]
	\begin{center}
		\includegraphics[scale=.38]{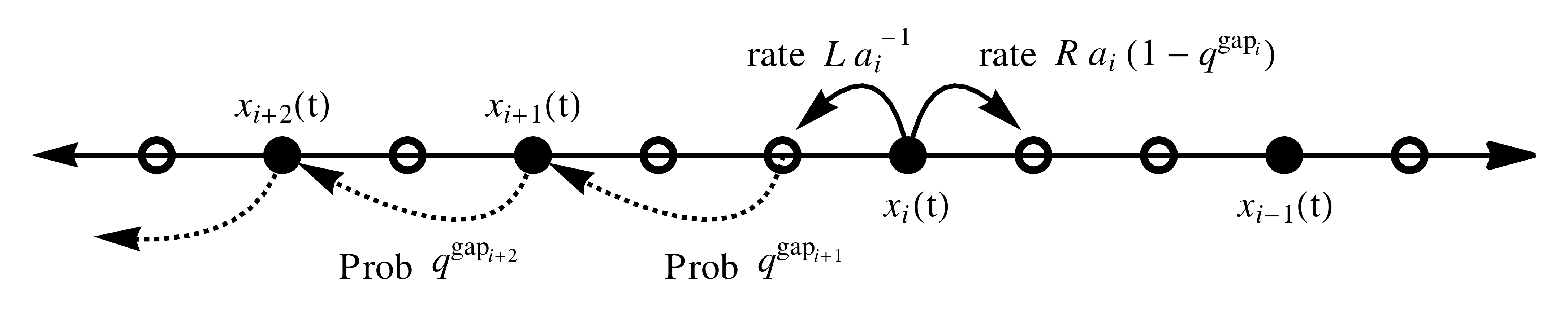}
	\end{center}  
  	\caption{Jump rates and probabilities
  	of pushes in $q$-PushASEP.}
  	\label{fig:qpushASEP}
\end{figure}

Clearly, the
$q$-PushASEP preserves the order of particles,
so we will always speak about the dynamics
of labeled particles
$x_1(t)> \ldots> x_N(t)$.
We assume that the $q$-PushASEP $\x(t)$ starts from the \emph{step initial condition}
defined as $x_i(0)=-i$, $i=1,\ldots,N$. 

It is worth noting that the 
first particle $x_1(t)$ performs a very simple dynamics:
it jumps to the right or to the left by one (independently
of other particles) at rates
$\RR a_1$ and $\LL a_1^{-1}$, respectively. Likewise, 
the first $n<N$ particles $x_1(t),\ldots,x_n(t)$
evolve without any dependence on those particles
$x_{n+1}(t),\ldots,x_N(t)$ 
to their left. Therefore, even though we have restricted
our attention to an $N$-particle system, 
we may also think of this as the evolution
of the $N$ rightmost particles in a system with more than $N$
particles.


\subsection{Traffic model} 
\label{sub:traffic_model}

The $q$-PushASEP may be viewed as a model of 
traffic on a one-lane highway in the following sense. 
Let $v\gg0$, and set
$c_j(t):=v t+x_j(t)$, $j=1,\ldots,N$,
where $x_1(t)> \ldots>x_N(t)$ evolve according to the $q$-PushASEP.
Viewing 
$c_1(t)> \ldots > c_N(t)$
as positions
of cars on the highway (i.e., we understand
their positions relative to a moving reference frame), one can interpret the dynamics as follows.

The jump of a particle $x_j$ to the right by one (under the $q$-PushASEP)
may be viewed as a brief 
acceleration of the car $c_j$,
after which $c_j$ becomes closer to $c_{j-1}$, and after that
continues to maintain the constant global speed $v$. 
Chances that $c_j$ will briefly accelerate are lower 
if the car $c_{j-1}$ is already close ahead
because of the rate 
$a_j\RR(1-q^{\gap j})$ of right jumps. 

The left jump of $x_j$ may be interpreted as a brief slowdown of the $j$th car,
after which it continues to maintain the constant global speed $v$.
When such a slowdown happens, the
car $c_{j+1}$ behind
$c_j$ sees the brake lights of $c_j$, and may also quickly slow down.
The probability of the latter event is higher when $c_{j+1}$ is closer to $c_j$
because of the pushing probability $q^{\gap {j+1}}$ in the $q$-PushASEP.
If $c_{j+1}$ decides to slow down, then $c_{j+2}$
in turn sees the brake lights of $c_{j+1}$, and may also decide to
brake, and so on.


\subsection{Relation to other models} 
\label{sub:relation_to_other_models}

When $\LL=0$
(i.e., only right jumps are allowed),
the $q$-PushASEP
turns into \emph{$q$-TASEP} ($q$-deformation of the totally
asymmetric simple exclusion process), which 
is an interacting
particle system introduced in 
\cite{BorodinCorwin2011Macdonald},
see also
\cite{BorodinCorwinSasamoto2012}, 
\cite{BorodinCorwin2013discrete},
\cite{BorodinCorwinPetrovSasamoto2013},
\cite{SasamotoWadati1998},
and 
\cite{OConnellPei2012}.
	
On the other hand, when 
$\RR=0$ (i.e., we permit only left jumps), 
our
process
essentially becomes the 
\emph{$q$-PushTASEP}
introduced in \cite{BorodinPetrov2013NN} as a 
one-dimensional marginal of a certain 
stochastic dynamics on two-dimensional
arrays of interlacing particles. 

Thus, the $q$-PushASEP interpolates between 
the $q$-TASEP and the $q$-PushTASEP.
See also 
Appendix \ref{sec:two_dimensional_dynamics} for an explanation of 
how the $q$-PushASEP is also related to a dynamics
on two-dimensional interlacing arrays.

Under the $q\to0$ degeneration, our process
becomes \emph{PushASEP} --- a two-sided particle system 
(in the sense that particles can jump to the left and to the 
right) 
which interpolates between TASEP and PushTASEP,
see \cite{alimohammadi1999two},
\cite{BorFerr08push}. 
The two latter processes appeared in 
\cite{Spitzer1970} 
(in that paper the PushTASEP was called the \emph{long-range TASEP}),
see also \cite{Liggett1985}, \cite{Liggett1999}.

See also 
\cite{Povolotsky_Mendes_2006},
\cite{Povolotsky2013}
for related developments.

\begin{remark}
	Similarly to \cite{BorFerr08push}, one can make the parameters
	$\RR$ and $\LL$ in the definition of the $q$-PushASEP
	depend on time (in a sufficiently nice way). 
	This will lead to replacement of 
	the quantities
	$\RR t$ and $\LL t$ 
	in our final formulas
	(e.g., \eqref{moments} or \eqref{Fred_det} below) by 
	$\int_{0}^{t}\RR(s)ds$
	and
	$\int_{0}^{t}\LL(s)ds$,
	respectively.
	To make exposition clearer, we will consider
	only constant $\RR$ and $\LL$.
\end{remark}


\subsection{Moments} 
\label{sub:moments}

To formulate one of our main results, 
define the Weyl chamber (of type A) as
\begin{align}\label{Weyl_chamber}
	\W^{k,N}_{\ge0}:=\{\n=(n_1,\ldots,n_k)\in\Z^{k}\colon N\ge 
	n_1\ge \ldots \ge n_k\ge0\}.
\end{align}
We compute  
joint 
$q$-moments
(or $q$-exponential moments)
of positions 
of several particles under the $q$-PushASEP:
\begin{theorem}\label{thm:moments}
	For any $\n\in\W^{k,N}_{\ge0}$,
	\begin{align}&
		\label{moments}
		\E\left(
		\prod_{i=1}^{k}q^{x_{n_i}(t)+n_i}
		\right)
		=\frac{(-1)^{k}q^{k(k-1)/2}}{(2\pi\i)^{k}}
		\oint \cdots\oint
		\prod_{1\le A<B\le k}
		\frac{z_A-z_B}{z_A-qz_B}
		\prod_{j=1}^{k}
		\left(\prod_{i=1}^{n_j}
		\frac{a_i}{a_i-z_j}\right)
		\frac{\Pi_t(qz_j)}{\Pi_t(z_j)}
		\frac{dz_j}{z_j},
	\end{align}
	where 
	\begin{align}\label{Pi_t}
		\Pi_t(z):=e^{t(\RR z+\LL z^{-1})}.
	\end{align}
	Here $\x(t)$ denotes the $q$-PushASEP started
	from the step initial condition $\{x_i(0)=-i\}_{i=1}^{N}$.
	The contour for 
	$z_A$ contains $a_1,\ldots,a_N$ and 
	all of 
	the contours $\{qz_B\}_{B>A}$, but not zero (see Fig.~\ref{fig:nested}
	for an example of contours).
\end{theorem}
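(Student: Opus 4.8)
The plan is to derive the moment formula via the \emph{quantum many body system} (or duality) approach, which has become standard for $q$-TASEP-type models. First I would introduce the observables
\[
	h(\x;\n):=\E\left(\prod_{i=1}^{k}q^{x_{n_i}(t)+n_i}\right),\qquad \n\in\W^{k,N}_{\ge0},
\]
and show that, as a function of time, $h(\x;\n)$ satisfies a closed system of ODEs: $\frac{d}{dt}h=\gen^{(k)}h$, where $\gen^{(k)}$ is a \emph{free} (up to boundary conditions) difference operator acting in the variables $n_1,\dots,n_k$. Concretely, one applies the generator of the $q$-PushASEP to the product $\prod_i q^{x_{n_i}+n_i}$ and checks that, after using the explicit right-jump rates $a_i\RR(1-q^{\gap i})$ and the left-jump-plus-push rates, the result can be rewritten as a combination of the same type of observables with shifted indices. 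This is the $q$-deformed analogue of the fact that $q$-TASEP is dual to a system on the Weyl chamber. The left jumps with their geometric pushing probabilities $q^{\gap{j+1}}$ are precisely engineered so that the push contributions telescope and the action on $h$ becomes a clean one-sided shift operator in each $n_i$; verifying this telescoping identity is the computational heart of the argument.

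The key structural point is that on the ``diagonal'' (when $n_i=n_{i+1}$ for some $i$, i.e.\ two indices coincide) the naive free evolution must be corrected, but these corrections can be absorbed into a \emph{boundary condition} rather than a genuine interaction term — this is the well-known ``true evolution equals free evolution with a boundary condition'' phenomenon (as in the Bethe ansatz / $q$-Hahn literature). So I would state a lemma: $h$ is the unique solution of (i) the free evolution equation $\frac{d}{dt}h=\sum_{j}\bigl(\RR(\mathcal{S}_j^{-1}-1)+\LL(\mathcal{S}_j-1)\bigr)h$ where $\mathcal{S}_j$ shifts $n_j$, suitably interpreted with the factors $a_i/(a_i-z_j)$ encoding the $a_i$-dependence, (ii) a two-body boundary condition relating the value at $n_j=n_{j+1}$ to a $q$-deformed derivative, (iii) the boundary condition at $n_j=0$ coming from the virtual particle $x_0=+\infty$ (which forces $q^{x_0}=0$, hence a vanishing condition) together with an appropriate condition at $n_j=N$, and (iv) the initial condition $h|_{t=0}=\prod_i q^{x_{n_i}(0)+n_i}=\prod_i q^{0}=1$ coming from the step start $x_i(0)=-i$. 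Uniqueness for such a finite system of linear ODEs is immediate once one knows the solution stays in a finite-dimensional space.

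Then I would verify that the right-hand side of \eqref{moments} solves this system. The nested contour integral is a superposition of plane waves: each $z_j$-integrand contributes an eigenfunction $\prod_{i\le n_j}\frac{a_i}{a_i-z_j}$ of the one-body operator with eigenvalue $\RR z_j+\LL z_j^{-1}$, which is exactly why $\Pi_t(qz_j)/\Pi_t(z_j)=e^{t(\RR(qz_j-z_j)+\LL(z_j^{-1}q^{-1}... ))}$ — more precisely $\Pi_t(z)=e^{t(\RR z+\LL z^{-1})}$ produces the correct time evolution when one tracks how the shift $\mathcal{S}_j$ acts on $\prod_{i\le n_j}\frac{a_i}{a_i-z_j}$, namely multiplication by $\frac{a_{n_j+1}-z_j}{a_{n_j+1}}$ or its inverse. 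The Bethe-type factor $\prod_{A<B}\frac{z_A-z_B}{z_A-qz_B}$ is chosen so that the two-body boundary condition at $n_A=n_B$ holds: the residue structure of this factor under the contour nesting is precisely what cancels the unwanted diagonal terms. Finally, the contour prescription (each $z_A$-contour encircling $a_1,\dots,a_N$ and all $\{qz_B\}_{B>A}$ but not $0$) must be checked to be consistent with (a) deforming contours to extract residues when checking the $n_j=n_{j+1}$ boundary condition — the pole at $z_A=qz_B$ is inside, the pole at $z_A=z_B$ is irrelevant — and (b) the $n_j=0$ and initial conditions, for which one shrinks all contours to small circles around $0$ and notes that the integrand at $\n=(0,\dots,0)$, $t=0$, has no pole at $0$ except the $dz_j/z_j$, giving residue $1$ after accounting for the prefactor $(-1)^kq^{k(k-1)/2}$.

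The main obstacle I expect is step one: carefully applying the $q$-PushASEP generator to the product observable and proving the telescoping of the pushing terms, i.e.\ that the genuinely long-range, $k$-body-looking push contributions collapse to local shifts plus a two-body boundary condition. One has to handle the interplay between the exclusion constraint in right jumps (the factor $1-q^{\gap i}$) and the pushing cascade in left jumps, and confirm that when several of the $n_i$ coincide the combinatorics still produces only the stated boundary condition and no residual interaction. Once this duality/evolution-equation lemma is in hand, matching it with the contour-integral ansatz is a routine (if delicate) residue computation that parallels the $q$-TASEP case in \cite{BorodinCorwin2011Macdonald} and \cite{BorodinCorwinSasamoto2012}, now with the extra factor $e^{t\LL z^{-1}}$ accounting for left jumps.
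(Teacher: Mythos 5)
Your overall scheme (closed evolution equations for the $q$-moments, reduction to a free system with boundary conditions, then verification of the nested contour integral) is the paper's scheme, but there is a genuine gap at the central step: you assert that the left-jump/push contributions ``telescope and the action on $h$ becomes a clean one-sided shift operator in each $n_i$,'' and accordingly you write the free evolution as $\frac{d}{dt}h=\sum_j\bigl(\RR(\mathcal S_j^{-1}-1)+\LL(\mathcal S_j-1)\bigr)h$ with local shifts and only the usual two-body boundary condition. This is false already for $k=1$: applying the $q$-PushASEP generator to $q^{x_n(t)+n}$, the pushes do telescope, but to a \emph{cumulative} (non-local in $n$) expression,
\begin{align*}
	\frac{d}{dt}\,\E\, q^{x_n(t)+n}
	=\RR(1-q)\,a_n\bigl(\E\, q^{x_{n-1}(t)+n-1}-\E\, q^{x_n(t)+n}\bigr)
	+\LL(q^{-1}-1)\sum_{i=1}^{n}a_i^{-1}\,\E\, q^{x_i(t)+i},
\end{align*}
and no local shift reproduces the sum over $i\le n$ (note there is no ``diagonal'' at $k=1$ to hide this in a boundary condition). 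Relatedly, your plane-wave claim fails for the $\LL$-part: shifting $n\mapsto n+1$ multiplies $\prod_{i\le n}\frac{a_i}{a_i-z}$ by $\frac{a_{n+1}}{a_{n+1}-z}$, which is not the eigenvalue $z^{-1}$ needed to match $\Pi_t(qz)/\Pi_t(z)=e^{t(\RR(q-1)z+\LL(q^{-1}-1)z^{-1})}$. The paper's resolution is precisely the new ingredient your proposal is missing: the free equation uses the operator $(\nabla_\av^{-1}f)(n)=-a_n^{-1}f(n)-\cdots-a_1^{-1}f(1)$ for the $\LL$-part, and in addition to the $k-1$ boundary conditions one needs $k-1$ \emph{cumulative} conditions $\bigl([\nabla_\av^{-1}]_i-q^{-1}[\nabla_\av^{-1}]_{i+1}\bigr)u=0$ at $n_i=n_{i+1}$; only with both families can the true equations (whose $\LL$-part moves index clusters from $j$ to any $i\le j$) be recovered from the free ones. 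Moreover $\nabla_\av^{-1}$ acts on $\prod_{i\le n}\frac{a_i}{a_i-z}$ as multiplication by $-z^{-1}$ only modulo terms that integrate to zero over the prescribed contours (which exclude the origin), so the contour choice enters the eigenrelation itself, not just the boundary check.

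Two smaller inaccuracies: no condition at $n_j=N$ is needed (one simply introduces dummy parameters $a_n$, $n>N$, i.e.\ views the system as the $N$ rightmost particles of a larger one), and the initial-condition check does not proceed by shrinking contours to circles around $0$ (the contours must contain the $a_i$ and exclude $0$); rather one expands the $z_1$-contour to infinity, picking up the residue at $z_1=0$ equal to $-q^{-(k-1)}$, and iterates, and this must be verified for every $\n\in\W^{k,N}_{\ge0}$, not only $\n=(0,\dots,0)$. Until the cumulative operator and the second family of conditions are in place, the reduction ``true $=$ free $+$ two-body boundary condition'' that your argument hinges on would fail.
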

A simple argument bounding the $q$-PushASEP by Poisson processes
shows that the moments in the left-hand side of 
\eqref{moments} are indeed finite (see \S \ref{sub:finiteness_of_moments}).

\begin{figure}[htbp]
	\begin{center}
		\includegraphics[scale=1.2]{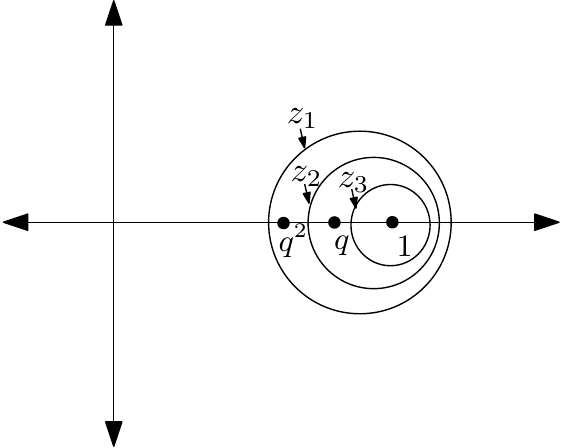}
	\end{center}  
  	\caption{Nested contours for $k=3$ and $a_i\equiv1$.}
  	\label{fig:nested}
\end{figure}

\begin{remark}
	It is worth noting that while the left-hand side of \eqref{moments}
	is symmetric in $n_1,\ldots,n_k$, the right-hand side is 
	\emph{not}. Theorem \ref{thm:moments}
	states the equality of the two expressions only for 
	$\n=(n_1,\ldots,n_k)$ belonging to the Weyl chamber.
\end{remark}


\subsection{True and free evolution equations} 
\label{sub:true_and_free_evolution_equations}

Our strategy of the proof of Theorem \ref{thm:moments} is the following.
We observe 
(see \S \ref{sec:true_evolution_equations}) that the expectations of 
$\prod_{i=1}^{k}q^{x_{n_i}(t)+n_i}$ for $\n\in\W^{k,N}_{\ge0}$
evolve according to closed systems of coupled ODEs,
which we call the \emph{true evolution equations}.
The equations' right-hand sides include as a summand 
the right-hand sides of
\cite[(3)]{BorodinCorwinSasamoto2012}
which corresponds to the $q$-TASEP,
and also new terms corresponding to the 
$q$-PushTASEP governing the left jumps.

Let us first recall 
\cite{BorodinCorwinSasamoto2012}
(see also \cite{BorodinCorwin2013discrete})
how one could solve the 
true evolution equations 
in the case
$\LL=0$ (i.e., when our particle system reduces to the $q$-TASEP).
For the $q$-TASEP,
the true evolution equations are 
constant coefficient and separable away from the boundary
of the Weyl chamber $\W^{k,N}_{\ge0}$
(but not on the boundary). 
In this case,
extending the constant coefficient, separable equations to all
of $\Z^{k}_{\ge0}$ results in the free evolution equations
on a function $u(t,\n)$, where $t\ge0$ and $\n\in\Z^{k}_{\ge0}$.
One of the results about the $q$-TASEP in \cite{BorodinCorwinSasamoto2012} 
is that the restriction to $\W^{k,N}_{\ge0}$
of a solution of the free evolution equations
satisfying certain boundary conditions
(resulting from the difference between the free and the true evolution
equations)
and with the right initial data in $\W^{k,N}_{\ge0}$
coincides with the solution of 
the true evolution equations.

In principle, there could be
a boundary condition 
for any possible combination of \emph{clusters}
(=~stings of equal coordinates)
in the vector $\n$.
A remarkable property of the $q$-TASEP
(\emph{integrability} in the language of (quantum) many body systems, 
cf. \cite{Bethe1931})
is that it suffices to consider only the following $k-1$ two-body boundary conditions:
for all $\n\in\Z^{k}_{\ge0}$
such that for some $i\in\{1,2,\ldots,k-1\}$
one has $n_i=n_{i+1}$,
\begin{align}\label{k-1_boundary}
	(\nabla_i-q\nabla_{i+1})u(t,\n)=0.
\end{align}
Here for a function $f\colon\Z\to\R$, we denote $(\nabla f)(n):=f(n-1)-f(n)$, and $\nabla_j$
above means that the difference operator acts in the $j$-th coordinate.

\medskip

Let us now explain how the 
$q$-PushASEP situation differs from that of the $q$-TASEP.
For $\LL>0$,
the corresponding true evolution
equations for any $\n\in\W^{k,N}_{\ge0}$
involve linear combinations 
of expectations of 
$\prod_{i=1}^{k}q^{x_{n_i}(t)+n_i}$ 
with $\n$ running
up to the boundary of $\W^{k,N}_{\ge0}$.
Thus, it is not a priori clear how to 
write down the free evolution
equations (in $\Z^{k}_{\ge0}$) for the $q$-PushASEP
such that 
their solutions satisfying the same $k-1$ boundary conditions
\eqref{k-1_boundary}
coincide with solutions of the true evolution equations
(in $\W^{k,N}_{\ge0}$). 

A way to write down the free evolution equations which we employ instead is to introduce 
\emph{another} set of $k-1$ conditions which we call \emph{\integral{}}.
For simpler notation, assume now that $a_i=1$ for all $i=1,\ldots,N$
(see \S \ref{sec:free_q_pushasep} for a general case).
The \integral{} conditions are the following: 
for all $\n\in\Z^{k}_{\ge0}$
such that for some $i\in\{1,2,\ldots,k-1\}$
one has $n_i=n_{i+1}$,
\begin{align}\label{k-1_integral}
	(\nabla_i^{-1}-q^{-1}\nabla_{i+1}^{-1})u(t,\n)=0.
\end{align}
Here by $\nabla^{-1}$ we mean the operator acting on 
$f\colon\Z\to\R$ as 
$(\nabla^{-1}f)(n):=-f(n)-f(n-1)- \ldots-f(1)$. 
Note that
$(\nabla\nabla^{-1} f)(n)=f(n)$,
but
$(\nabla^{-1}\nabla f)(n)=f(n)-f(0)$.
As before, 
$\nabla^{-1}_{j}$ means the application of the 
operator in the $j$th coordinate. 

We then obtain the \emph{free evolution equations}
for the $q$-PushASEP 
which are constant coefficient and separable in 
$\Z^{k}_{\ge0}$,
and prove that solutions of the free evolution equations 
satisfying
\eqref{k-1_boundary}--\eqref{k-1_integral}
and having the right initial data inside $\W^{k,N}_{\ge0}$
coincide with solutions of the true evolution equations 
for the $q$-PushASEP. 

The emergence of the \integral{} conditions
\eqref{k-1_integral} which might seem somewhat mysterious 
from the Bethe ansatz point of view (cf. the treatment 
of the $q$-TASEP in \cite{BorodinCorwinPetrovSasamoto2013})
appeared due to a certain 
``symmetry'' of formulas responsible for the
right ($q$-TASEP) and left ($q$-PushTASEP) jumps. We plan
to investigate deeper reasons behind 
these \integral{} conditions in a future work.


\subsection{Solving evolution equations for the $q$-PushASEP} 
\label{sub:solving_free_and_true_evolution_equations}

One readily sees that there exists a general class of solutions to the 
free evolution equations for the $q$-PushASEP, but it is not
immediately clear how one should combine them
in the right way
so that they satisfy \eqref{k-1_boundary}--\eqref{k-1_integral}.
However, when the $q$-PushASEP starts from the 
step initial configuration, it is possible to 
\emph{check} that the nested contour integral
expression in the right-hand side of 
\eqref{moments} satisfies the free evolution
equations, $k-1$ boundary and $k-1$ \integral{} conditions \eqref{k-1_boundary}--\eqref{k-1_integral}, 
and has the right initial data,
thus producing 
the desired moment formula.

The moment formula \eqref{moments} for $q$-TASEP 
was initially proved for all $n_i=n$, $i=1,\ldots,k$, from the Macdonald process 
framework of \cite{BorodinCorwin2011Macdonald}. 
The general $\n$ formula was guessed and 
checked in \cite{BorodinCorwinSasamoto2012} 
via the many body system approach, 
and reproved in the Macdonald process framework in \cite{BCGS2013}.
Our formula \eqref{moments} for the $q$-PushASEP
differs only in the function $\Pi_t(z)$
which was equal to $e^{tz}$ for the $q$-TASEP. 
Discrete-time $q$-TASEPs of \cite{BorodinCorwin2013discrete}
also admit nested contour integral formulas for moments
with other choices of $\Pi_t(z)$ 
(in \cite[Thm. 2.1]{BorodinCorwin2013discrete}
these functions are denoted by $f_t^{\mathrm{geo}}(z)$
and $f_t^{\mathrm{Ber}}(z)$). 
The concrete form \eqref{Pi_t} of $\Pi_t(z)$
for the $q$-PushASEP can be guessed from any of the three sources:
\begin{enumerate}[(1)]
	\item Applying the nested contour integral ansatz 
	for solving the free evolution equations.
	\item By analogy with the 
	PushASEP (i.e., the $q=0$ case),
	e.g., see \cite[Prop. 2.1]{BorFerr08push}.
	Presence of factors of the form 
	$e^{t(\RR z+\LL z^{-1})}$
	in the PushASEP is a manifestation of its connection to the
	algebraic framework of the
	\emph{two-sided} Schur processes \cite{Borodin2010Schur}. 
	\item 
	The $q$-PushASEP 
	should fit into a more general framework of
	the \emph{two-sided
	Macdonald processes}
	extending the theory of \cite{BorodinCorwin2011Macdonald},
	\cite{BCGS2013}.
	The present paper provides a motivation 
	for a further investigation of the two-sided Macdonald processes.
	See also Appendix \ref{sec:two_dimensional_dynamics}.
\end{enumerate}


\subsection{Fredholm determinant} 
\label{sub:fredholm_determinant}

If $\LL>0$, observables of the form $\E\left(\prod_{i=1}^{k}q^{x_{n_i}(t)+n_i}\right)$
grow rapidly in $k$, namely, as $c_1\exp\left\{c_2 e^{c_3k}\right\}$ (for suitable positive constants). 
This means that the moments are not sufficient to identify the distribution 
of the process (at any given positive time). 

However, 
using \eqref{moments}
and the rigorously proved result for 
$\LL=0$, one can \emph{formally} write down a conjectural Fredholm 
determinantal formula for the 
$q$-Laplace transform of $q^{x_n(t)+n}$ (for any $1\le n\le N$).
For simplicity, assume that all $a_i\equiv 1$.
We will use the notation 
\begin{align*}
	(a;q)_{\infty}:=\prod_{i=0}^{\infty}(1-aq^{i}),
	\qquad
	(a;q)_{k}:=\prod_{i=0}^{k-1}(1-aq^{i}).
\end{align*}

\begin{claim}\label{claim:Fredholm}
	For all $\zeta\in\C\setminus\R_{>0}$,
	\begin{align}\label{Fred_det_identity}
		\E\left(\frac{1}{(\zeta q^{x_n(t)+n};q)_{\infty}}\right)
		=\det(I+K_{\zeta}).
	\end{align}
	Here $\det(I+K_{\zeta})$ is the Fredholm determinant
	of $K_{\zeta}\colon L^{2}(C_1)\to L^{2}(C_1)$,
	where $C_1$ is a small positively 
	oriented circle containing 1, and $K_{\zeta}$ 
	is an integral operator with kernel
	\begin{align}\label{Fred_det}
		K_{\zeta}(w,w')=\frac{1}{2\pi\i}
		\int_{-\i\infty+1/2}^{{\i\infty+1/2}}
		\frac{\pi}{\sin(-\pi s)}(-\zeta)^{s}
		\frac{G(q^{s}w)}{G(w)}
		\frac{1}{q^{s}w-w'}ds,
	\end{align}
	with (see \eqref{Pi_t})
	\begin{align*}
		G(w):=(w;q)_{\infty}^{n}\Pi_t(w).
	\end{align*}
\end{claim}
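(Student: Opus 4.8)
The plan is to \emph{derive} the identity \eqref{Fred_det_identity} formally from the moment formula \eqref{moments}, following the template which is made rigorous when $\LL=0$ by the Macdonald process machinery of \cite{BorodinCorwin2011Macdonald}, \cite{BCGS2013} together with the analysis of \cite{BorodinCorwinSasamoto2012}, and then to isolate the single step at which this derivation ceases to be a proof once $\LL>0$. \textbf{Step 1 (moment generating series).} First I would expand the left-hand side. Using the $q$-exponential identity $(\zeta;q)_{\infty}^{-1}=\sum_{k\ge0}\zeta^{k}/(q;q)_{k}$ with $\zeta$ replaced by $\zeta q^{x_n(t)+n}$,
\[
	\frac{1}{(\zeta q^{x_n(t)+n};q)_{\infty}}
	=\sum_{k\ge0}\frac{\zeta^{k}}{(q;q)_{k}}\,q^{k(x_n(t)+n)}.
\]
Taking expectations, formally interchanging $\E$ with the sum, and substituting \eqref{moments} with $\n=(n,\dots,n)$ and $a_i\equiv1$ --- so that the single-variable factor appearing there is exactly $(1-z)^{-n}\,\Pi_t(qz)/\Pi_t(z)=G(qz)/G(z)$ --- one obtains
\begin{align*}
	\E\!\left(\frac{1}{(\zeta q^{x_n(t)+n};q)_{\infty}}\right)
	&=\sum_{k\ge0}\frac{\zeta^{k}}{(q;q)_{k}}\,
	\frac{(-1)^{k}q^{k(k-1)/2}}{(2\pi\i)^{k}}\\
	&\quad\times\oint\!\cdots\!\oint\;
	\prod_{1\le A<B\le k}\frac{z_A-z_B}{z_A-qz_B}\;
	\prod_{j=1}^{k}\frac{G(qz_j)}{G(z_j)}\,\frac{dz_j}{z_j},
\end{align*}
with contours nested as in Theorem \ref{thm:moments}.

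\textbf{Step 2 (un-nest the contours and resum).} Next I would deform every $z_j$-contour onto one small circle $C_1$ around $1$; this picks up residues at the points $z_A=qz_B$, and the standard Macdonald-type symmetrization and residue bookkeeping (as in \cite{BorodinCorwin2011Macdonald}, \cite{BorodinCorwinSasamoto2012}) reorganizes the $k$-th summand into a sum over set partitions of $\{1,\dots,k\}$ into ``strings'', where a string of length $m$ contributes a single integral over $C_1$ whose per-variable factors telescope to $G(q^{m}w)/G(w)$ and whose string geometry gives $(q^{m}w-w')^{-1}$. Summing over $k$ against $\zeta^{k}/(q;q)_{k}$ and invoking the exponential formula for set partitions collapses the whole series to $\det(I+\widetilde K_{\zeta})$, where $\widetilde K_{\zeta}(w,w')=\sum_{m\ge1}c_m\,\zeta^{m}\,\big(G(q^{m}w)/G(w)\big)(q^{m}w-w')^{-1}$ for explicit constants $c_m$. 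Finally, rewriting $\sum_{m\ge1}$ as the sum of the residues of $\frac{\pi}{\sin(-\pi s)}(-\zeta)^{s}$ at $s=1,2,\dots$ and closing the $s$-contour to the right of the line $\Re s=1/2$ would identify $\widetilde K_{\zeta}$ with the Mellin--Barnes kernel $K_{\zeta}$ of \eqref{Fred_det}, completing the formal derivation.

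\textbf{The main obstacle.} For $\LL>0$ none of the analytic steps above is legitimate, which is precisely why the statement is a conjecture rather than a theorem. In $G(q^{s}w)$ the factor $\Pi_t(q^{s}w)=e^{t(\RR q^{s}w+\LL q^{-s}w^{-1})}$ behaves like $e^{t\LL q^{-s}w^{-1}}$, i.e.\ \emph{doubly exponentially}, as $\Re s\to+\infty$, so the Mellin--Barnes contour in \eqref{Fred_det} cannot be closed to the right; equivalently, the moments $\E(q^{k(x_n(t)+n)})$ grow like $c_1\exp\{c_2 e^{c_3 k}\}$, the generating series $\sum_{k}\zeta^{k}\E(q^{k(x_n(t)+n)})/(q;q)_{k}$ diverges for every $\zeta\ne0$, and the interchange of $\E$ with the sum in Step 1 is therefore not justified. (When $\LL=0$ one has $\Pi_t(q^{s}w)=e^{t\RR q^{s}w}\to1$ as $\Re s\to+\infty$, the contour does close, and the moment growth is mild enough for the argument to go through.) Hence a genuine proof cannot run through the moments at all: what is needed is an independent handle on the bounded analytic function $\zeta\mapsto\E\big((\zeta q^{x_n(t)+n};q)_{\infty}^{-1}\big)$ on $\C\setminus\R_{>0}$. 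The most promising route --- mirroring the rigorous $q=0$ case of \cite{BorFerr08push}, which rests on the two-sided Schur processes of \cite{Borodin2010Schur} --- is to realize the $q$-PushASEP inside a ``two-sided Macdonald process'' extending \cite{BorodinCorwin2011Macdonald}, \cite{BCGS2013} (cf.\ \S\ref{sub:solving_free_and_true_evolution_equations} and Appendix \ref{sec:two_dimensional_dynamics}) and to carry out the Fredholm-determinant computation at the level of the Macdonald measure, where convergence is not an issue; alternatively one could try to match the two sides through a different, convergent family of observables, or via a uniqueness theorem for the $q$-Laplace transform, but making either of these precise is itself the open problem.
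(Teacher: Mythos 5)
Your proposal follows essentially the same route as the paper: since \eqref{Fred_det_identity} is stated only as a conjecture, the paper's own treatment (\S\ref{sub:discussion_of_the_fredholm_determinantal_formula}) is exactly this formal derivation---$q$-Binomial expansion, (unjustified) interchange of $\E$ and the sum, deformation of the nested contours to $C_1$, resummation and Mellin--Barnes---together with the observation, via the moment growth $c_1\exp\{c_2e^{c_3k}\}$ of Lemma \ref{lemma:growth_of_moments}, that the series \eqref{divergent} diverges for $\LL>0$, so you have correctly reproduced both the heuristic and the precise obstruction. The only minor discrepancies are that the paper notes the contour-unnesting step itself is rigorous (only the reordering of summands and the Mellin--Barnes step fail), and that its primary proposed route to a proof is a discrete-time regularization \cite{MatveevPetrov2014} in addition to the two-sided Macdonald process framework you emphasize.
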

A formal approach to establish \eqref{Fred_det_identity} is to expand 
$\E\left({1}/{(\zeta q^{x_n(t)+n};q)_{\infty}}\right)$
by means of the $q$-Binomial theorem, and 
interchange the expectation and the summation
in the resulting series. 
A general scheme 
of doing this is explained in \cite[\S3]{BorodinCorwinSasamoto2012}.
However, for $\LL>0$, our moments of the $q$-PushASEP lead to a \emph{divergent} series
after one interchanges the expectation and the summation.

This is quite similar to the issue which arises in the polymer replica method, in which 
one attempts to recover the Laplace transform of the solution 
to the stochastic heat equation from a divergent moment generating series 
\cite{Dotsenko},
\cite{Calabrese_LeDoussal_Rosso}. 
We believe that for the $q$-PushASEP
this issue of divergence can be resolved 
(and thus \eqref{Fred_det} can be rigorously justified)
by passing to a suitable discrete-time approximation
(one may call it \emph{regularization})
possessing nested contour integral formulas 
similar to those of Theorem \ref{thm:moments}.
In this approximation, the derivation of a Fredholm determinantal
formula would be rigorous, and then a rather straightforward
continuous-time limit would yield the proof of Conjecture \ref{claim:Fredholm}. Constructing suitable discrete-time approximations 
is the subject of a future work \cite{MatveevPetrov2014}.

For $\LL=0$, the Fredholm determinantal formula \eqref{Fred_det} 
corresponds to the $q$-TASEP
and a proof of the conjecture 
appeared in \cite{BorodinCorwin2011Macdonald}, 
see also \cite{BorodinCorwinSasamoto2012}.
It was established by 
interchanging the expectation and the summation,
which is perfectly valid in this case.
Indeed, for $\LL=0$ (and the step initial configuration), 
all coordinates $x_n(t)+n$ are nonnegative. Thus, the expectations 
$\E (q^{k(x_n(t)+n)})$ are all bounded by one, and thus the 
series $\sum_{k=0}^{\infty}
{\zeta^{k}\E (q^{k(x_n(t)+n)})}/{(q;q)_{k}}$
is convergent for small enough
values of $\zeta$.


\subsection{Outline} 
\label{sub:outline}

In \S \ref{sec:true_evolution_equations} we discuss the 
$q$-PushASEP in detail, and write down the true evolution equations
for the observables in the left-hand side of \eqref{moments}.
We also suggest a Markov process dual to the $q$-PushASEP.
In \S \ref{sec:free_q_pushasep} we reduce the true evolution
equations to the free evolution equations with 
$k-1$ boundary and $k-1$ \integral{} conditions. We show that a solution
of the free evolution equations also satisfies the true evolution equations.
In \S \ref{sec:nested_contour_integral_formulas_for_the_q_pushasep}
we check that the nested contour integral
formula in the right-hand side of \eqref{moments}
satisfies the free evolution equations, and thus prove 
Theorem \ref{thm:moments}. We also discuss the Fredholm determinantal
formula (Conjecture \ref{claim:Fredholm}).
In Appendix \ref{sec:two_dimensional_dynamics}
we describe how the $q$-PushASEP is 
related to (two-sided) Macdonald processes.
In Appendix \ref{sec:formal_scaling_limit_as_}
we briefly discuss connections of 
our model with the semi-discrete stochastic heat equation. 


\subsection{Acknowledgments} 
\label{sub:acknowledgments}

The authors would like to thank 
Alexei Borodin
for very helpful discussions and remarks. 
IC was partially supported by the NSF through DMS-1208998 as well as by Microsoft Research through the Schramm Memorial Fellowship, and by the Clay Mathematics Institute through a Clay Research Fellowship.
LP~was partially supported by 
the RFBR-CNRS grant 11-01-93105.



\section{True evolution equations} 
\label{sec:true_evolution_equations}

In this section we write down closed systems
of coupled ODEs ({true evolution equations}) which are satisfied by
the expectations of the observables
of the form
$\prod_{i=1}^{k}q^{x_{n_i}(t)+n_i}$, 
where
$\n=(n_1,\ldots,n_k)$
belongs to the Weyl chamber
$\W^{k,N}_{\ge0}$ \eqref{Weyl_chamber}.

\subsection{Finiteness of moments} 
\label{sub:finiteness_of_moments}

\begin{lemma}\label{lemma:moments_finite}
	Let $\x(t)$ be the position at time $t\ge0$ 
	of the $q$-PushASEP started from any 
	fixed initial condition, i.e., from any point of $X^{N}$ 
	defined in \eqref{space_X}.
	Then for any $\n\in\W^{k,N}_{\ge0}$, the expectation 
	$\E\left(\prod_{i=1}^{k}q^{x_{n_i}(t)+n_i}\right)$ is finite.
\end{lemma}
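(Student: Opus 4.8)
The plan is to dominate the leftward motion of the $q$-PushASEP by a family of independent Poisson processes, and then to compute directly the expectation of the resulting product of $q$-exponentials of Poisson variables. Since $q\in(0,1)$, the only way $\prod_{i=1}^{k}q^{x_{n_i}(t)+n_i}$ can fail to be integrable is if some $x_{n_i}(t)$ can drift too far to the left, so it suffices to control the leftward displacement of each coordinate pathwise.

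First I would set up the standard graphical (Poisson-clock) construction of the $q$-PushASEP: each particle $x_j$ ($1\le j\le N$) carries a ``right clock'' of rate $a_j\RR$ and a ``left clock'' $N_j$ of rate $a_j^{-1}\LL$, all $2N$ clocks being mutually independent Poisson processes; a ringing of the right clock of $x_j$ triggers a right jump when $\gap{j}>0$ (further thinned by the factor $1-q^{\gap{j}}$), while a ringing of $N_j$ triggers a left jump of $x_j$ together with the ensuing instantaneous push. As all rates are bounded and $N$ is finite, almost surely only finitely many ringings occur on $[0,t]$ and each induced push terminates after finitely many steps, so the dynamics is well defined. Two elementary observations then drive the proof: (i) a right-clock ringing moves a single particle to the right and hence never decreases any coordinate; (ii) a ringing of $N_j$ moves a contiguous block $x_j,x_{j+1},\dots,x_m$ (for some random $m\ge j$) to the left by exactly one site each, so it decreases $x_n$ by at most one, and only when $j\le n$.

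Combining (i) and (ii) gives the pathwise lower bound $x_n(t)\ge x_n(0)-\sum_{j=1}^{n}N_j(t)$ for every $n$ and all $t\ge0$. Since $m\mapsto q^{m}$ is decreasing, $q^{x_{n_i}(t)+n_i}\le q^{x_{n_i}(0)+n_i}\prod_{j=1}^{n_i}q^{-N_j(t)}$, and multiplying over $i=1,\dots,k$,
\[
\prod_{i=1}^{k}q^{x_{n_i}(t)+n_i}\ \le\ \Big(\prod_{i=1}^{k}q^{x_{n_i}(0)+n_i}\Big)\prod_{j=1}^{N}q^{-c_j N_j(t)},\qquad c_j:=\#\{i\colon n_i\ge j\}\le k .
\]
The prefactor is a finite constant since the initial condition is fixed, and by independence of the $N_j$ and the probability generating function of the Poisson law,
\[
\E\!\Big(\prod_{j=1}^{N}q^{-c_jN_j(t)}\Big)=\prod_{j=1}^{N}\E\big((q^{-c_j})^{N_j(t)}\big)=\prod_{j=1}^{N}\exp\!\big(a_j^{-1}\LL\,t\,(q^{-c_j}-1)\big)<\infty ,
\]
because each $q^{-c_j}$ is a finite positive number. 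This proves the lemma (the case $\LL=0$ is trivial, as then $N_j\equiv0$ and all coordinates only increase).

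I do not expect any serious obstacle. The one point that deserves care is the precise form of observation (ii): one must verify, directly from the definition of the instantaneous pushing mechanism, that a single left-clock ringing displaces any fixed coordinate $x_n$ to the left by at most one unit. This is immediate, since a push always acts on a contiguous cluster of particles, each of which moves by exactly one site, so $x_n$ contributes to the count only when the cluster originates at some $x_j$ with $j\le n$ and reaches index $n$.
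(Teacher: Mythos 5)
Your proof is correct and follows essentially the same route as the paper's: dominate the leftward displacements by Poisson clocks of total rate $\LL(a_1^{-1}+\cdots+a_N^{-1})$ and conclude via the finiteness of exponential moments of the Poisson distribution. The only difference is that you make the pathwise domination $x_n(t)\ge x_n(0)-\sum_{j\le n}N_j(t)$ and the resulting product of Poisson generating functions explicit, whereas the paper states the same bound more tersely through the leftmost particle.
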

\begin{proof}
	Left jumps of the $q$-PushASEP 
	introduce factors of $q^{-1}$ into
	$\E\left(\prod_{i=1}^{k}q^{x_{n_i}(t)+n_i}\right)$, and right
	jumps lead to factors of $q$. Since $0<q<1$, we need to estimate only the left jumps.

	Observe that the leftmost particle $x_N(t)$ has the possibility to be pushed
	to the left by any of the particles, so it can go to the left as far as a Poisson process 
	with rate $\LL (a_1^{-1}+\ldots+a_N^{-1})$. 
	Since the Poisson distribution has finite exponential moments 
	(i.e., $\E(e^{z\xi})<\infty$ for all $z\in\C$, where $\xi$ has Poisson distribution),
	we see that the claim holds.
\end{proof}


\subsection{Markov generator of $q$-PushASEP} 
\label{sub:markov_generator_of_q_pushasep}

It is readily seen from the definition 
(\S \ref{sub:definition_of_the_process})
that 
the Markov generator of the $q$-PushASEP
(acting on functions $f\colon X^N\to\R$)
has the form
\begin{align}&
	(\gen^{\text{$q$-PushASEP}}
	f)(\x)
	=\sum_{i=1}^{N}\RR a_i
	\big(1-q^{x_{i-1}-x_i-1}\big)
	\big(f(\x^{+}_{i})-f(\x)\big)
	\label{gen_qpush}
	\\&\hspace{145pt}+
	\sum_{i=1}^{N}
	\LL a_i^{-1}
	\sum_{j=i}^{N}q^{x_i-x_j-(j-i)}
	\big(1-q^{x_j-x_{j+1}-1}\big)
	\big(f(\x^{-}_{j,i})-f(\x)\big).
	\nonumber
\end{align}
Here we have denoted for all $1\le i\le j\le N$:
\begin{align*}
	\x_i^+&:=(x_1,\ldots,x_{i-1},x_i+1,x_{i+1},\ldots,x_N);
	\\
	\x^{-}_{j,i}&:=(x_1,\ldots,x_{i-1},x_{i}-1,x_{i+1}-1,\ldots,x_{j-1}-1,x_{j}-1,x_{j+1},\ldots,x_N).
\end{align*}
That is, $\x_i^+$ corresponds to the configuration
in which the $i$th particle has jumped to the right by one, and 
$\x^{-}_{j,i}$ means the configuration in which the particles 
with indices $m=i,i+1,\ldots,j$ have jumped to the left by one.
Note that if any of these jumps breaks the strict order of the particles, 
then the coefficient in \eqref{gen_qpush} by the corresponding term vanishes. 
This reflects the fact that the $q$-PushASEP preserves the order of the particles.


\subsection{Remark: Stationary distributions} 
\label{sub:remark_stationary_distributions}

Here let us present a calculation which suggests
how stationary measures of the $q$-PushASEP with infinitely many particles 
$-\infty<\ldots<x_1<x_0<x_{-1}<\ldots<+\infty$ look like (without discussing the 
existence of this process or proving that these measures are indeed stationary).
Assume translation invariance, i.e., that $a_i=1$ for all $i\in\Z$.

The case of the $q$-TASEP
(i.e., when $\LL=0$)
is discussed in 
\cite[\S 3.3.3]{BorodinCorwin2011Macdonald}.
There
the stationary measures 
are those for which the \emph{gaps}
$x_{i-1}-x_i-1=\gap{i}$ between particles are independent and
have the \emph{$q$-geometric distribution}
$\mathrm{qGeo}\left(\al\RR^{-1}\right)$, where $\al\in[0,\RR)$ is arbitrary:
\begin{align*}
	\prob\big(x_{i-1}-x_i-1=k\big)=(\al\RR^{-1};q)_{\infty}\frac{(\al\RR^{-1})^{k}}{(q;q)_{k}},\qquad
	k=0,1,\ldots.
\end{align*}

One can perform a formal calculation suggesting that
this distribution is also stationary for the 
$q$-PushTASEP part of the dynamics. Indeed, 
during a small time interval $dt$, each
$\gap{i}$ can  
increase by one with probability $\LL\, dt$ (which corresponds to $x_i$ jumping to the left).
Next, observe that the particle $x_{i-1}$ moves to the left at total rate
(accounting for all possible pushes that $x_{i-1}$ can receive from the left)
\begin{align}\label{sum_LRal}
	\LL\left(1+(1-\al\RR^{-1})+(1-\al\RR^{-1})^2+\ldots\right)=\frac{\LL\RR}{\al},
\end{align}
because $1-\al\RR^{-1}=\E (q^{\gap{i}})$ for all $i\in\Z$ (which readily follows from the 
$q$-Binomial theorem). This means that 
during a small time interval $dt$, the value of $\gap{i}$ 
can decrease by one with probability $\frac{\LL\RR}{\al}(1-q^{\gap{i}})dt$.
Here the factor $1-q^{\gap{i}}$ in the latter expression is the probability that the moved 
particle $x_{i-1}$ did \emph{not} push $x_{i}$. 
One can readily check that 
the law $\gap{i}\sim \mathrm{qGeo}\left(\al\RR^{-1}\right)$
is invariant for the 
one-dimensional Markov chain on $\Z_{\ge0}$ which we have just described.
This suggests
that this law should be preserved
by the $q$-PushASEP evolution.\footnote{One needs to additionally 
justify that $\gap{i}$ indeed evolves according
to this one-dimensional Markov chain.}

In the non-translation invariant case 
i.e., when the $a_i$'s are different,\footnote{One should also 
impose reasonable growth and decay assumptions on the $a_i$'s.} the consideration 
of the right jumps (i.e., the $q$-TASEP dynamics) leads to the following distributions of the gaps: 
$\gap{i}\sim \mathrm{qGeo}\left(\al\RR^{-1}a_i^{-1}\right)$.
Then the 
series in \eqref{sum_LRal} is no longer a geometric progression, 
but it still sums to $\frac{\LL\RR}{\al}$, which suggests that the 
independent
$q$-geometric
gaps 
$\mathrm{qGeo}\left(\al\RR^{-1}a_i^{-1}\right)$
are preserved by the left ($q$-PushTASEP) jumps
in the non-translation invariant setting
as~well.

It would be interesting 
to generalize the 
coupling approach
of \cite{Balasz_Komjathy_Seppalainen} to the
two-sided setting.


\subsection{$\y$-variable notation} 
\label{sub:notation}

Our aim now is to understand how the generator
\eqref{gen_qpush} acts on moments $\E\left(\prod_{i=1}^{k}q^{x_{n_i}(t)+n_i}\right)$.
It is convenient to pass from the coordinates $\n\in\W^{k,N}_{\ge0}$ to
a new set of coordinates. Denote
\begin{align}\label{Y_space}
	Y^N:=\Big\{\y=(y_0,y_1,\ldots,y_N)\in\Z^{N+1}_{\ge0}\Big\},
	\qquad
	Y^N_k:=\Big\{\y\in Y^N\colon \sum_{i=0}^{N}y_i=k\Big\}.
\end{align}
To each $\n\in\W^{k,N}_{\ge0}$ associate $\y(\n)\in Y^{N}_{k}$
defined by $y_i(\n):=|\{j\colon n_j=i\}|$. 
In the reverse direction, for any $\y\in Y^{N}_{k}$,
denote by 
$\n(\y)$
the unique $\n\in\W^{k,N}_{\ge0}$ for which $\y(\n)=\y$.
To illustrate, if $\n=(5,5,4,2,1,1,1)$, then $\y(\n)=(0,3,1,0,1,2)$.
We will call the number of nonzero coordinates of $\y$ the number
of \emph{clusters} of $\n$ (the present example has four clusters).

Let us define, for each $x\in X^{N}$ and $\y\in Y^N$,
\begin{align}\label{H_function}
	H(\x,\y):=\prod_{i=0}^{N}q^{(x_i+i)y_i}.
\end{align}
The product above starts from zero
which means that, by agreement, $H(\x,\y)=0$ if $y_0>0$.


\subsection{Action of $\gen^{\text{\rm{}$q$-PushASEP}}$ on $H(\x,\y)$} 
\label{sub:action_of_gen_q-pushasep_on_h_x_y_}

For all $0\le i \le j\le N$ denote
\begin{align*}
	\y^{j,i}:=(y_0,y_1,\ldots,y_{i-1},y_i+1,y_{i+1},\ldots,
	y_{j-1},y_j-1,y_{j+1},\ldots,y_N).
\end{align*}
That is, in $\y^{j,i}$ the $j$th coordinate is decreased by one, and the
$i$th coordinate is increased by one.
Clearly, $\y^{i,i}=\y$.

Denote by $\gen^{\text{dual}}$ the following operator acting on functions 
$g\colon Y^N\to\R$:
\begin{align}\label{dual_operator}
	(\gen^{\text{\rm{}dual}}g)(\y):=
	\sum_{i=1}^{N}\RR a_i 
	(1-q^{y_i})\big(g(\y^{i,i-1})-g(\y)\big)
	+\sum_{i=1}^{N}\LL a_i^{-1}
	\sum_{j=i}^{N}(q^{-y_j}-1)q^{-y_i- \ldots-y_{j-1}}	
	g(\y^{j,i}).
\end{align}
\begin{remark}\label{rmk:TAZRP}
	Note that the first sum (containing the parameter $\RR$) is the
	Markov generator of the \emph{$q$-Boson particle system} (a certain 
	totally asymmetric 
	zero range process)
	which is dual to the $q$-TASEP, see \cite{BorodinCorwinSasamoto2012}
	(where this process was called $q$-TAZRP)
	and also \cite{BorodinCorwinPetrovSasamoto2013}. 
	The second summand is new and it is responsible for the 
	left jumps (which are governed by the $q$-PushTASEP, cf. 
	\S \ref{sub:relation_to_other_models}). 
	See also \S \ref{sub:dual_markov_process_to_q_pushasep} below.
\end{remark}

\begin{proposition}\label{prop:action_on_H}
	For any $\x\in X^N$ and $\y\in Y^N$ we have 
	\begin{align*}
		\gen^{\text{\rm{}$q$-PushASEP}}_{\x}H(\x,\y)
		=
		\gen^{\text{\rm{}dual}}_{\y}H(\x,\y),
	\end{align*}
	where the subscripts $\x$ and $\y$ in the operators mean the
	variables in which the operators act.
\end{proposition}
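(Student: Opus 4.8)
The plan is to verify the operator identity by a direct computation, comparing the action of $\gen^{\text{$q$-PushASEP}}$ in the $\x$-variables with that of $\gen^{\text{dual}}$ in the $\y$-variables when both are applied to $H(\x,\y)=\prod_{i=0}^{N}q^{(x_i+i)y_i}$. The key observation that makes this work is the simple way $H$ transforms under the elementary moves: since $H(\x^{+}_{i},\y)=q^{y_i}H(\x,\y)$ (shifting $x_i$ up by one multiplies the $i$th factor by $q^{y_i}$), we get $H(\x^{+}_{i},\y)-H(\x,\y)=(q^{y_i}-1)H(\x,\y)$; and since $H(\x^{-}_{j,i},\y)=q^{-(y_i+y_{i+1}+\cdots+y_j)}H(\x,\y)$ (decreasing $x_i,\ldots,x_j$ each by one), we get $H(\x^{-}_{j,i},\y)-H(\x,\y)=\big(q^{-(y_i+\cdots+y_j)}-1\big)H(\x,\y)$. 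Dually, on the $\y$-side, $H(\x,\y^{i,i-1})=q^{x_i+i-(x_{i-1}+i-1)}H(\x,\y)=q^{x_i-x_{i-1}+1}H(\x,\y)$, and more generally $H(\x,\y^{j,i})=q^{(x_i+i)-(x_j+j)}H(\x,\y)$.

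First I would substitute these into the right-hand side of \eqref{gen_qpush}, factoring out $H(\x,\y)$, to obtain
\[
\gen^{\text{$q$-PushASEP}}_{\x}H(\x,\y)
=\left[\sum_{i=1}^{N}\RR a_i(1-q^{x_{i-1}-x_i-1})(q^{y_i}-1)
+\sum_{i=1}^{N}\LL a_i^{-1}\sum_{j=i}^{N}q^{x_i-x_j-(j-i)}(1-q^{x_j-x_{j+1}-1})\big(q^{-(y_i+\cdots+y_j)}-1\big)\right]H(\x,\y).
\]
Next I would substitute into the right-hand side of \eqref{dual_operator}, again factoring out $H(\x,\y)$, to obtain
\[
\gen^{\text{dual}}_{\y}H(\x,\y)
=\left[\sum_{i=1}^{N}\RR a_i(1-q^{y_i})(q^{x_i-x_{i-1}+1}-1)
+\sum_{i=1}^{N}\LL a_i^{-1}\sum_{j=i}^{N}(q^{-y_j}-1)q^{-y_i-\cdots-y_{j-1}}\,q^{(x_i+i)-(x_j+j)}\right]H(\x,\y).
\]
The $\RR$-terms are immediately seen to agree: $(1-q^{x_{i-1}-x_i-1})(q^{y_i}-1)=(1-q^{y_i})(q^{x_i-x_{i-1}+1}-1)$ as a trivial algebraic identity after expanding. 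So the entire content is matching the $\LL$-terms.

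For the $\LL$-terms I would fix $i$ and compare the two inner sums over $j\ge i$. On the $q$-PushASEP side the summand is (up to the common factor $\LL a_i^{-1}q^{x_i+i}$) equal to $q^{-x_j-j}(1-q^{x_j-x_{j+1}-1})\big(q^{-(y_i+\cdots+y_j)}-1\big)$; the plan is to expand the telescoping-in-$j$ structure. Write $P_j:=q^{-(y_i+\cdots+y_j)}$ with $P_{i-1}:=1$, so $q^{-(y_i+\cdots+y_j)}-1=P_j-P_{i-1}$ and $q^{-y_i-\cdots-y_{j-1}}(q^{-y_j}-1)=P_j-P_{j-1}$. Then on the dual side the inner sum is $\sum_{j\ge i}(P_j-P_{j-1})q^{-x_j-j}$, while on the $q$-PushASEP side one has $\sum_{j\ge i}(P_j-P_{i-1})q^{-x_j-j}(1-q^{x_j-x_{j+1}-1})$. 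Expanding $q^{-x_j-j}(1-q^{x_j-x_{j+1}-1})=q^{-x_j-j}-q^{-x_{j+1}-(j+1)}$, this is itself telescoping in $j$, so $\sum_{j\ge i}(P_j-P_{i-1})\big(q^{-x_j-j}-q^{-x_{j+1}-(j+1)}\big)$. Performing an Abel summation (summation by parts) on this, the boundary term at $j=i-1$ vanishes because $P_{i-1}-P_{i-1}=0$ and the term at $j=N+1$ vanishes because $x_{N+1}=-\infty$ forces $q^{-x_{N+1}-(N+1)}=0$ (equivalently the relevant exclusion coefficient $1-q^{x_N-x_{N+1}-1}$ is absorbed since $x_{N+1}=-\infty$); what remains is exactly $\sum_{j\ge i}(P_j-P_{j-1})q^{-x_j-j}$, matching the dual side. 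Restoring the common factor $\LL a_i^{-1}q^{x_i+i}$ and summing over $i$ completes the proof.

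The main obstacle will be bookkeeping the boundary behaviour correctly: one must be careful that the virtual particles $x_0=+\infty$, $x_{N+1}=-\infty$ make the appropriate terms vanish (the $\RR$-term with $i$ such that a jump would collide is killed by $1-q^{\gap i}=0$, and the $\LL$-sum's telescoping endpoint at $j=N$ is handled by $x_{N+1}=-\infty$), and likewise that $H(\x,\y)=0$ whenever $y_0>0$ is consistent on both sides — in particular the $\y^{i,i-1}$ move with $i=1$ lands in $\{y_0>0\}$, consistently with the $\RR a_1$ term on the $\x$-side where $q^{x_0-x_1-1}=q^{+\infty-x_1-1}=0$ makes the coefficient $\RR a_1(1-q^{\gap1})$ equal $\RR a_1$, matching $\RR a_1(1-q^{y_1})$ only after noting $H(\x,\y^{1,0})=0$; I would check these edge cases explicitly. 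Everything else is the elementary algebra of geometric-type sums and the two summation-by-parts manipulations described above, which I would present compactly rather than term by term.
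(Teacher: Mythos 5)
Your overall strategy is the same as the paper's: apply the elementary transformation rules of $H(\x,\y)$ under the moves $\x_i^{+}$, $\x^{-}_{j,i}$, $\y^{i,i-1}$, $\y^{j,i}$ and match coefficients. Your Abel-summation treatment of the $\LL$-terms is exactly the paper's regrouping (``collect the coefficient of each $H(\x,\y^{j,i})$'') made explicit --- the paper encodes the same telescoping in the identity $q^{x_i-x_j-(j-i)}\big(1-q^{x_j-x_{j+1}-1}\big)H(\x,\y)=H(\x,\y^{j,i})-H(\x,\y^{j+1,i})$ --- and that part of your argument, including the boundary behaviour at $j=N$ (via $x_{N+1}=-\infty$) and the $y_0>0$ convention, is correct.

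However, the $\RR$-term matching as written fails. In $\y^{j,i}$ the $j$th coordinate is \emph{decreased} and the $i$th \emph{increased}, so specializing your own (correct) general formula $H(\x,\y^{j,i})=q^{(x_i+i)-(x_j+j)}H(\x,\y)$ to $(j,i)=(i,i-1)$ gives
\begin{align*}
H(\x,\y^{i,i-1})=q^{(x_{i-1}+i-1)-(x_i+i)}H(\x,\y)=q^{x_{i-1}-x_i-1}H(\x,\y),
\end{align*}
not $q^{x_i-x_{i-1}+1}H(\x,\y)$ as you state. Consequently the identity you call ``trivial after expanding,'' namely $(1-q^{x_{i-1}-x_i-1})(q^{y_i}-1)=(1-q^{y_i})(q^{x_i-x_{i-1}+1}-1)$, is false: taking $x_{i-1}-x_i-1=1$ and $y_i=1$, the left side equals $-(1-q)^2<0$ while the right side equals $(1-q)^2/q>0$. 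The statement is of course still true and the repair is one line: with the corrected exponent both $\RR$-summands equal $-\RR\, a_i(1-q^{y_i})\big(1-q^{x_{i-1}-x_i-1}\big)H(\x,\y)$, and the edge case $i=1$ remains consistent because $q^{x_0-x_1-1}=0$ on the $\x$-side matches $H(\x,\y^{1,0})=0$ on the $\y$-side. Fix that exponent and remove the false identity; with this correction your proof is complete and follows essentially the same route as the paper's.
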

\begin{proof}
	This follows from the observations
	\begin{align*}
		H(\x^{+}_{i},\y)-
		H(\x,\y)&=
		(q^{y_i}-1)H(\x,\y);\\
		H(\x^{-}_{j,i},\y)-
		H(\x,\y)&=
		(q^{-y_i-y_{i+1}- \ldots-y_j}-1)H(\x,\y);\\
		(1-q^{x_{i-1}-x_{i}-1})H(\x,\y)&=
		H(\x,\y)-H(\x,\y^{i,i-1});\\
		q^{x_i-x_j-(j-i)}
		\big(1-q^{x_j-x_{j+1}-1}\big)
		H(\x,\y)&=
		H(\x,\y^{j,i})-
		H(\x,\y^{j+1,i}).
	\end{align*}
	To get \eqref{dual_operator} after applying
	the above identities to \eqref{gen_qpush},
	one should also regroup summands 
	in the second sum (which contains the parameter~$\LL$)
	by collecting the coefficients by each $g(\y^{j,i})$.
\end{proof}


\subsection{True evolution equations} 
\label{sub:true_evolution_equations_y}

Proposition \ref{prop:action_on_H} motivates the following definition:

\begin{definition}\label{def:true}
	A function $h(t,\y)$, $t\ge0$, $\y\in Y^N$, is said to satisfy 
	the \emph{true evolution equations}
	with initial conditions $h_0(\y)$ 
	if
	\begin{enumerate}[(1)]
		\item For all $\y\in Y^N$ and $t\ge0$:
		\begin{align}\label{true_ODE}
			\frac{d}{dt}h(t,\y)=\gen^{\text{dual}}h(t,\y),
		\end{align}
		where the operator $\gen^{\text{dual}}$ given by \eqref{dual_operator}
		acts in the variables $\y$.
		\item (boundary conditions)
		For all $\y\in Y^N$ such that $y_0>0$, $h(t,\y)\equiv 0$ for all $t\ge 0$.
		\item (initial conditions)
		For all $\y\in Y^N$, $h(0,\y)=h_0(\y)$.
	\end{enumerate}
\end{definition}
\begin{lemma}\label{lemma:unique_sol}
	The above true evolution equations have unique solutions.
\end{lemma}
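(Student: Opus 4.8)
The plan is to prove existence and uniqueness separately, exploiting the triangular structure of the operator $\gen^{\text{dual}}$ with respect to the level sets $Y^N_k$. First I would observe that $\gen^{\text{dual}}$ preserves each $Y^N_k$ (every term $g(\y^{j,i})$ and $g(\y)$ keeps the total $\sum_i y_i$ equal to $k$), so the system \eqref{true_ODE}–(2) decouples into a separate system on each $Y^N_k$. Since $Y^N_k$ is a \emph{finite} set (it consists of compositions of $k$ into $N+1$ nonnegative parts), on each such level the true evolution equations form a finite linear autonomous ODE system $\frac{d}{dt}h = Ah$ with the linear boundary constraint that the coordinates indexed by $\{\y\colon y_0>0\}$ vanish.

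Next I would handle the boundary condition by noting it is consistent with the dynamics: if $y_0>0$ then in every term $g(\y^{j,i})$ appearing in $(\gen^{\text{dual}}g)(\y)$ one still has a positive zeroth coordinate unless $i=0$, but the operator \eqref{dual_operator} only ever \emph{increases} coordinates with index $i\ge 1$ (the inner sums run over $i\ge1$) — so $\gen^{\text{dual}}$ never creates a configuration with $y_0>0$ out of one with $y_0=0$, and conversely the equations on the $y_0>0$ coordinates are self-contained. Hence one may simply restrict to the finite-dimensional subspace $\{g\colon Y^N_k\to\R,\ g(\y)=0\text{ whenever }y_0>0\}$, on which $\gen^{\text{dual}}$ acts as a linear operator $A_k$; the true evolution equations become $\frac{d}{dt}h=A_k h$, $h(0)=h_0$, whose unique solution is $h(t)=e^{tA_k}h_0$. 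Assembling these over all $k\ge0$ (the initial data $h_0$, and hence $h(t,\cdot)$, is specified on all of $Y^N=\bigsqcup_k Y^N_k$) gives a solution on all of $Y^N$, and uniqueness on each level forces uniqueness overall.

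The only point requiring a little care — and the place I would expect to spend the most words — is confirming that the boundary-condition subspace is genuinely invariant under $\gen^{\text{dual}}$, i.e. that applying $A_k$ to a function vanishing on $\{y_0>0\}$ yields another such function. This amounts to checking that for $\y$ with $y_0=0$, every index shift $\y\mapsto\y^{j,i}$ occurring with a nonzero coefficient in \eqref{dual_operator} again has vanishing zeroth coordinate: indeed $i$ ranges over $1,\dots,N$ in both sums, so the zeroth coordinate is untouched, and the evaluations $g(\y^{i,i-1})$ in the first sum have $i-1\ge0$, with the $i-1=0$ case producing a configuration that may have positive zeroth coordinate but then the coefficient $\RR a_i(1-q^{y_i})$ multiplies $g$ evaluated there — which is $0$ by hypothesis — so it contributes nothing. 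Once this invariance is in place, the argument is the standard one for linear constant-coefficient ODEs on a finite-dimensional space.

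I should also record that the solution stays in the relevant space over all $t\ge0$: since $e^{tA_k}$ maps the (invariant, finite-dimensional) boundary-condition subspace to itself for every $t$, the function $h(t,\cdot)$ automatically satisfies condition (2) for all $t\ge0$, so the construction is consistent. This completes the proof sketch; no analytic estimates, contour manipulations, or appeal to probabilistic facts are needed — the statement is purely the well-posedness of a countable family of finite linear ODE systems with a linear constraint.
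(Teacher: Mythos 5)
Your proposal is correct and follows essentially the same route as the paper: the equations close on each finite level set $Y^N_k$, so everything reduces to finite constant-coefficient linear ODE systems, for which existence and uniqueness are standard (the paper additionally remarks that these systems are triangular, but that observation is not needed once finiteness is in hand). Your extra care with the boundary condition is sound as well --- just note that your sentence claiming $\gen^{\text{dual}}$ ``never creates a configuration with $y_0>0$ out of one with $y_0=0$'' is literally false because of the shift $\y^{1,0}$ coming from the $i=1$ term of the first sum in \eqref{dual_operator}, although your subsequent discussion of the $i-1=0$ case handles exactly this point, so the argument stands.
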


\begin{proof}
	The proof is the same as for the $q$-TASEP, see 
	\cite[Lemma 3.5]{BorodinCorwinSasamoto2012}.

	The operator $\gen^{\text{dual}}$ maps the space 
	of functions $g\colon Y^N_k\to\R$ onto itself. 
	Therefore, the true evolution equations reduce 
	to a collection of finite closed systems of ODEs indexed 
	by $k\ge1$. 

	Moreover, for each fixed $k$, the system of the
	true evolution equations is \emph{triangular}. Namely, 
	the derivative $\frac{d}{dt}h(t,\y)$ depends only on those
	$h(t,\y')$ for which $y'_i+\ldots+y_N'\le y_i+\ldots+y_N$
	for all $0\le i\le N$. The existence and uniqueness of solutions 
	to each finite, closed, triangular system of linear ODEs 
	is justified by standard methods, e.g., see \cite{ODEs}.
\end{proof}

\begin{theorem}\label{thm:true}
	For any $\x\in X^N$, and for the $q$-PushASEP $\{\x(t)\}_{t\ge0}$ started 
	from an arbitrary (non-random) initial condition $\x(0)=\x$, 
	the function $h(t,\y):=\E(H(\x(t),\y))$ solves the 
	true evolution equations with initial data $h_0(\y)=H(\x,\y)$.
\end{theorem}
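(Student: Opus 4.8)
The plan is to derive the true evolution equations directly from Proposition \ref{prop:action_on_H} together with the Markov property, essentially by the standard generator computation for expectations of observables along a Markov process. First I would observe that by Lemma \ref{lemma:moments_finite} the quantity $h(t,\y)=\E(H(\x(t),\y))$ is finite for every $\y\in Y^N$ and $t\ge0$, so all the manipulations below make sense; moreover $H(\x,\y)=0$ whenever $y_0>0$ for \emph{every} $\x\in X^N$ (by the convention following \eqref{H_function}), which immediately gives the boundary condition (2) in Definition \ref{def:true}, and the initial condition (3) is just $h(0,\y)=\E(H(\x(0),\y))=H(\x,\y)$ since the initial state is non-random.

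The main step is the ODE \eqref{true_ODE}. The plan is to use the Kolmogorov backward equation (or Dynkin's formula): for a function $f$ in the domain of the generator,
\begin{align*}
	\frac{d}{dt}\E\big(f(\x(t))\big)=\E\big((\gen^{\text{$q$-PushASEP}}f)(\x(t))\big).
\end{align*}
Applying this with $f(\cdot)=H(\cdot,\y)$ and then invoking Proposition \ref{prop:action_on_H}, which says $\gen^{\text{$q$-PushASEP}}_{\x}H(\x,\y)=\gen^{\text{dual}}_{\y}H(\x,\y)$, we get
\begin{align*}
	\frac{d}{dt}h(t,\y)=\E\big((\gen^{\text{dual}}_{\y}H)(\x(t),\y)\big)=\gen^{\text{dual}}_{\y}\,\E\big(H(\x(t),\y)\big)=\gen^{\text{dual}}h(t,\y),
\end{align*}
where the middle equality uses that $\gen^{\text{dual}}$ acts in the $\y$-variable by a \emph{finite} linear combination of evaluations $H(\x(t),\y^{j,i})$ with coefficients not depending on $\x(t)$, so expectation commutes with it (each term is integrable by Lemma \ref{lemma:moments_finite}). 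This is exactly \eqref{true_ODE}.

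The point requiring care — and the one I would expect to be the main obstacle — is justifying the backward equation itself, i.e., that $H(\cdot,\y)$ lies in an appropriate domain of the generator and that differentiation under the expectation is legitimate. The $q$-PushASEP is a pure-jump process, but it is not uniformly bounded: the leftmost particle can be pushed arbitrarily far left, and the instantaneous long-range pushes mean a single event can move many particles at once, so one cannot simply cite a bounded-rate argument. The clean way around this is the same domination used in Lemma \ref{lemma:moments_finite}: couple the process to independent Poisson clocks (rate $a_i\RR$ for the $i$th right clock, rate $\LL a_i^{-1}$ for the $i$th left clock), note that in any finite time only finitely many clock rings occur a.s., write $h(t,\y)$ via the jump chain / Duhamel expansion in those rings, and differentiate that (now genuinely finite) expansion termwise. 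The finiteness of all exponential moments of Poisson variables (already exploited in Lemma \ref{lemma:moments_finite}) provides the integrable domination needed to interchange $\frac{d}{dt}$, the infinite sum over clock-ring histories, and the expectation. Once this regularity bookkeeping is in place, the identity \eqref{true_ODE} is immediate from Proposition \ref{prop:action_on_H}, and the theorem follows. In the write-up I would keep this domination argument brief, as it mirrors the corresponding step for the $q$-TASEP in \cite{BorodinCorwinSasamoto2012}.
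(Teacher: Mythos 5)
Your proposal is correct and follows essentially the same route as the paper: the backward Kolmogorov equation, Proposition \ref{prop:action_on_H} to trade $\gen^{\text{$q$-PushASEP}}_{\x}$ for $\gen^{\text{dual}}_{\y}$, and commuting the $\x$-expectation with the finite $\y$-variable operator, with the boundary and initial conditions immediate. The only difference is that you spell out the Poisson-domination justification of the backward equation and the interchange of derivative and expectation, which the paper leaves implicit.
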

By linearity, one can also consider good enough
random initial configurations for the $q$-PushASEP.
In this case, one should take the initial data
to be 
$h_0(\y)=\E(H(\x,\y))$, where the expectation is with respect to the
initial configuration $\x$.
\begin{proof}
	Due to Lemma \ref{lemma:unique_sol}, it suffices to check that the 
	function $h(t,\y):=\E^{\x}\big(H(\x(t),\y)\big)$ (the superscript $\x$ means that the 
	expectation is taken with respect to the $q$-PushASEP starting from $\x$)
	satisfies \eqref{true_ODE} (boundary and initial conditions are straightforward).
	
	One has
	\begin{align*}
		\frac{d}{dt}\E^{\x}\big(H(\x(t),\y)\big)=
		\gen^{\text{$q$-PushASEP}}\E^{\x}\big(H(\x(t),\y)\big)
		=
		\E^{\x}\big(\gen^{\text{$q$-PushASEP}}H(\x(t),\y)\big).
	\end{align*}
	The first equality is the backwards Kolmogorov equations (essentially, the definition of a Markov
	generator), and the second one follows from the fact that the generator 
	$\gen^{\text{$q$-PushASEP}}$
	of the Markov semigroup of the $q$-PushASEP commutes 
	with the operators from this semigroup. 

	Next, using Proposition \ref{prop:action_on_H}, we can continue
	the above equalities ($\gen^{\text{$q$-PushASEP}}$
	and $\gen^{\text{dual}}$ act on $\x$ and $\y$ variables, respectively)
	\begin{align*}
		\E^{\x}\big(\gen^{\text{$q$-PushASEP}}H(\x(t),\y)\big)
		=
		\E^{\x}\big(\gen^{\text{dual}}H(\x(t),\y)\big)
		=
		\gen^{\text{dual}}\E^{\x}\big(H(\x(t),\y)\big).
	\end{align*}
	The last equality is due to the fact that the
	expectation is taken with respect to the $\x$ variables while the operator
	$\gen^{\text{dual}}$ acts in the $\y$ variables.
\end{proof}


\subsection{Remark: Markov process dual to the $q$-PushASEP} 
\label{sub:dual_markov_process_to_q_pushasep}

For $\LL>0$, the operator $\gen^{\text{dual}}$
\eqref{dual_operator} is \emph{not} a generator of any 
continuous-time Markov
process on the space $Y^N$ (cf. Remark \ref{rmk:TAZRP} about the $\LL=0$ case).
Indeed, applying this operator to the identity function, one has
\begin{align*}
	\gen^{\text{dual}}\mathbf{1}=
	\sum_{i=1}^{n}\LL a_i^{-1}(q^{-y_i- \ldots-y_N}):=C(\y).
\end{align*}
However, the fact that $C(\y)$ is not zero is the only obstacle 
preventing $\gen^{\text{dual}}$ from being a Markov generator.
Thus, let us define the following operator 
acting on functions $g\colon Y^N \to \R$ by
\begin{align}\label{dual_Markov}
	&(\gen^{\text{dual Markov}}
	g)(\y)
	:=
	(\gen^{\text{dual}}
	g)(\y)-C(\y)g(\y)
	\\&
	\hspace{10pt}=
	\sum_{i=1}^{N}\RR a_i 
	(1-q^{y_i})\big(g(\y^{i,i-1})-g(\y)\big)
	+\sum_{i=1}^{N}
	\LL a_i^{-1}
	\sum_{j=i+1}^{N}(q^{-y_j}-1)q^{-y_i- \ldots-y_{j-1}}	
	\big(g(\y^{j,i})-g(\y)\big).
	\nonumber
\end{align}
One readily sees that this operator can serve as a generator
of a continuous-time Markov process on $Y^N$; denote
this process by $\y(t)$. Representing the 
state space $Y^N$ as in Fig.~\ref{fig:qpush_dual_Markov}, 
we see that the transitions in $\y(t)$ look as follows:
\begin{enumerate}[(1)]
	\item ($q$-TASEP part) For each $i\in\{2,\ldots,N\}$,
	the coordinate $y_i(t)$ decreases by one and simultaneously
	$y_{i-1}(t)$ increases by one 
	(=~a particle jumps from site $i$ to site $i-1$)
	at rate $\RR a_i(1-q^{y_i(t)})$.
	\item ($q$-PushTASEP part) For each $1\le i<j\le N$, 
	the coordinate $y_j(t)$ decreases by one and simultaneously
	$y_{i}(t)$ increases by one 
	(=~a particle jumps from site $j$ to site $i$)
	at rate
	$\LL a_i^{-1}(q^{-y_j(t)}-1)q^{-y_i(t)- \ldots-y_{j-1}(t)}$.
\end{enumerate}
All these transitions occur independently.
Note that for $\LL>0$, the process $\y(t)$ is not 
zero range.
\begin{figure}[htbp]
	\vspace{-25pt}
	\begin{center}
		\includegraphics[scale=.24]{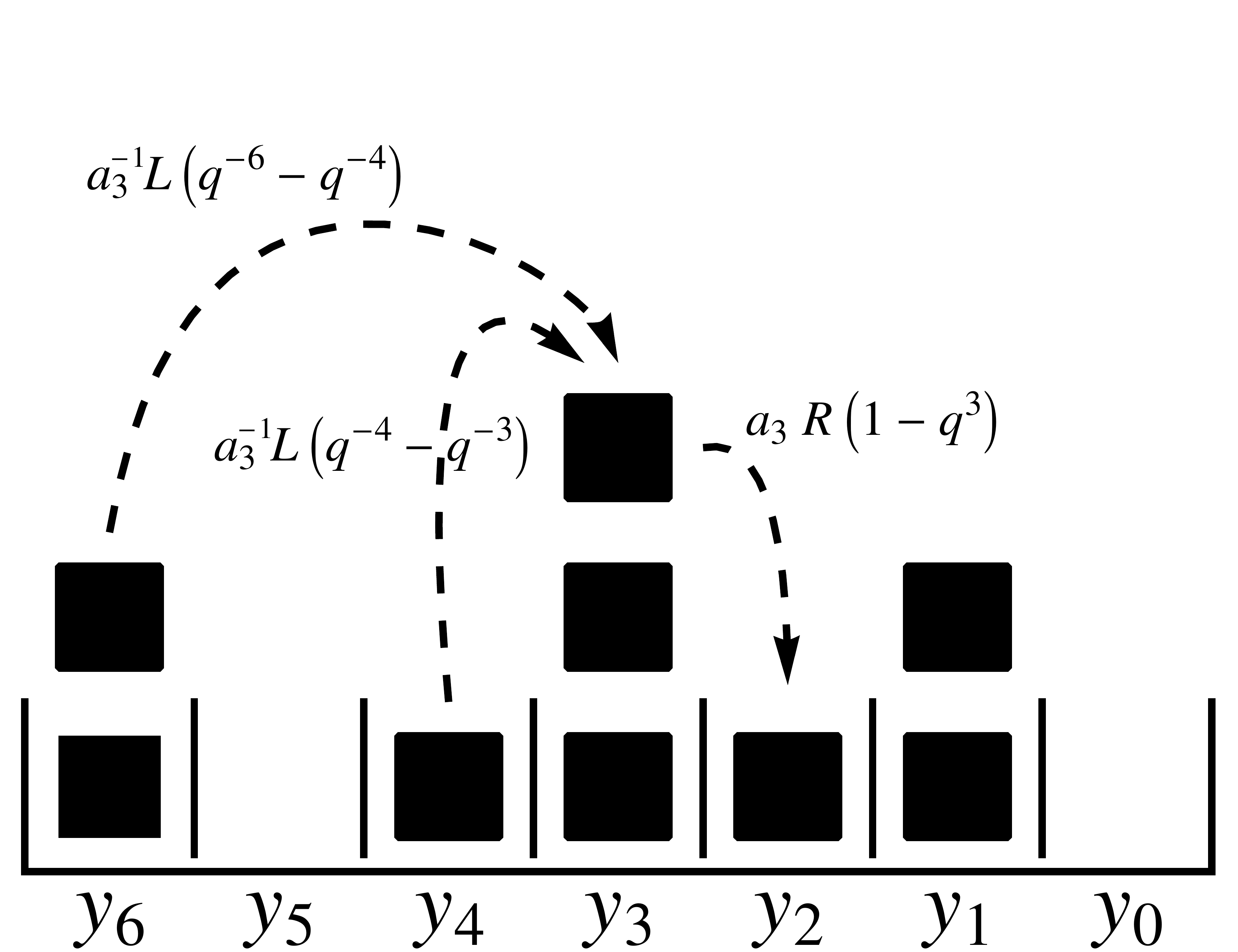}
	\end{center}  
  	\caption{Markov process $\y(t)$ dual to $q$-PushASEP.
  	Indicated are all possible jumps involving 
  	the parameter $a_3$.}
  	\label{fig:qpush_dual_Markov}
\end{figure}

Clearly, Proposition \ref{prop:action_on_H} implies that
the Markov generators of the $q$-PushASEP $\x(t)$ \eqref{gen_qpush}
and of the above process $\y(t)$ \eqref{dual_Markov}
satisfy the following generalized duality relation with respect
to the same function $H(\x,\y)$ \eqref{H_function}:
\begin{align*}
	\gen^{\text{\rm{}$q$-PushASEP}}_{\x}H(\x,\y)
	=
	\gen^{\text{\rm{}dual Markov}}_{\y}H(\x,\y)
	+C(\y)H(\x,\y).
\end{align*}
Consequently, the expectations of $H(\x,\y)$ with respect to 
evolution of 
the processes
$\x(t)$ and $\y(t)$ are related as
\begin{align}\label{generalized_duality}
	\E^{\x}\big(H(\x(t),\y)\big)=
	\E^{\y}\left(H(\x,\y(t))
	e^{\int_{0}^{t}C(\y(s))ds}\right).
\end{align}
Here in the left-hand side we have the expectation
under $\x(t)$ started from $\x$, and on the right there
is the expectation under the law of the process
$\y(t)$ started from $\y$. About (generalized) duality
of Markov processes, e.g., see
\cite[Ch. 4.4]{Ethier1986} and references therein.

One could use the generalized duality \eqref{generalized_duality}
to provide a probabilistic insight into Theorem \ref{thm:true}.
However, from the many body systems point of view
the process $\x(t)$ is not required to be dual to any 
Markov process. One only needs the fact that the observables 
$\E\left(\prod_{i=1}^{k}q^{x_{n_i}(t)+n_i}\right)$
evolve according to a closed system of ODEs.



\section{Free evolution equations with $k-1$ boundary and $k-1$ \integral{} conditions for the $q$-PushASEP} 
\label{sec:free_q_pushasep}

The goal of this section is to reduce the 
true evolution equations 
for the two-sided $q$-PushASEP
(Theorem \ref{thm:true}) 
to free evolution equations which are 
constant coefficient and separable (see the 
discussion in 
\S \ref{sub:true_and_free_evolution_equations}--\ref{sub:solving_free_and_true_evolution_equations}
for more detail). 


Let $\av:=(a_1,\ldots,a_N)$,
and recall that all $a_i$ are positive. 
Define the following operators acting on functions
$f\colon\Z\to\R$:
\begin{align*}
	(\nabla_{\av}^{\phantom{1}}f)(n):=a_n\big(f(n-1)-f(n)\big),\quad
	(\nabla_{\av}^{-1}f)(n):=-a_n^{-1}f(n)-a_{n-1}^{-1}f(n-1)-\ldots-a_1^{-1}f(1).
\end{align*}
By agreement, let us add ``dummy parameters'' $a_n$, $n>N$. They
do not enter main formulas of this section, but it
is convenient to include them to avoid the requirement that $n\le N$.
Equivalently, one may think of dealing with the process with 
infinitely many particles to the left of the origin (and finitely many
particles to the right of the origin), cf. the end of 
\S \ref{sub:definition_of_the_process}.

Clearly,
\begin{align*}
	(\nabla_\av^{\phantom{1}}\nabla_\av^{-1} f)(n)=f(n),
	\qquad
	(\nabla_\av^{-1}\nabla_\av^{\phantom{1}} f)(n)=f(n)-f(0).
\end{align*}
For a function on $\Z^{k}$, let $[\nabla_\av^{\phantom{1}}]^{\phantom{1}}_{j}$ and 
$[\nabla_\av^{-1}]^{\phantom{1}}_{j}$ denote the application 
of the corresponding operators in the $j$-th variable.



\begin{definition}\label{def:free}
	We say that a function 
	$u\colon\R_{\ge0}\times\Z_{\ge0}^{k}\to\R$ satisfies
	the free evolution equations with $k-1$ boundary conditions, $k-1$
	\integral{} conditions,
	and (partial) initial conditions $h_0$ inside the Weyl chamber 
	$\W^{k,N}_{\ge0}\subseteq\Z^{k}_{\ge0}$,
	if 
	\begin{enumerate}[(1)]
		\item 
		For all $\n\in\Z^{k}_{\ge0}$ and $t\ge0$,
		\begin{align}
			\frac{d}{dt}u(t,\n)=
			\RR\cdot
			(1-q)\sum_{i=1}^{k}
			[\nabla_\av^{\phantom{1}}]_{i}^{\phantom{1}}u(t;\n)+
			\LL\cdot
			(1-q^{-1})\sum_{i=1}^{k}
			[\nabla_\av^{-1}]_{i}^{\phantom{1}}u(t;\n).
			\label{free_eqn}
		\end{align}
		\item 
		For all $\n\in\Z^{k}_{\ge0}$
		such that for some $i\in\{1,2,\ldots,k-1\}$
		one has $n_i=n_{i+1}$, 
		\begin{align}\label{boundary_and_integral_conditions}
			\left([\nabla_\av^{\phantom{1}}]_{i}^{\phantom{1}}
			-q\cdot [\nabla_\av^{\phantom{1}}]_{i+1}^{\phantom{1}}\right)u(t,\n)=0;
			\qquad \qquad
			\left([\nabla_\av^{{-1}}]_{i}^{\phantom{1}}
			-q^{-1}\cdot [\nabla_\av^{{-1}}]_{i+1}^{\phantom{1}}\right)u(t,\n)=0;
		\end{align}
		\item
		For all $\n\in\Z^{k}_{\ge0}$ such that 
		$n_k=0$, $u(t,\n)\equiv 0$ for all $t\ge0$;
		\item
		For all $\n\in\W^{k,N}_{\ge0}$, $u(0,\n)=h_0(\y(\n))$.
	\end{enumerate}
\end{definition}
Note that the boundary conditions 
in \eqref{boundary_and_integral_conditions} coincide with the ones for the $q$-TASEP
\cite{BorodinCorwinSasamoto2012} (and it discrete variants, see
\cite{BorodinCorwin2013discrete}), which 
involve the usual difference operators
$[\nabla]_{i}$ and $[\nabla]_{i+1}$. This is because
$n_i=n_{i+1}$ implies $a_{n_i}=a_{n_{i+1}}$. 
We write the boundary conditions as in \eqref{boundary_and_integral_conditions}
to emphasize their certain similarity with the \integral{} conditions.
\begin{theorem}\label{thm:free}
	If a function $u\colon\R_{\ge0}\times\Z_{\ge0}^{k}\to\R$
	satisfies the free evolution equations
	with $k-1$ boundary and $k-1$
	\integral{} conditions
	(Definition \ref{def:free}), then
	for all $\y\in Y^N_k$, we have 
	$h(t,\y)=u(t,\n(\y))$, where $h$ is the solution
	to the true evolution equations
	(Definition \ref{def:true}) with initial condition
	$h_0(\y)$.	
\end{theorem}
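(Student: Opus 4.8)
The plan is to show that if $u$ satisfies the free evolution equations together with the $k-1$ boundary and $k-1$ \integral{} conditions, then its restriction $h(t,\y):=u(t,\n(\y))$ to the Weyl chamber satisfies the true evolution equations of Definition \ref{def:true}; uniqueness (Lemma \ref{lemma:unique_sol}) then finishes the argument. The initial and boundary (i.e.\ $y_0>0$, equivalently $n_k=0$) conditions transfer immediately by parts (3)--(4) of Definition \ref{def:free}. So the crux is to verify the ODE \eqref{true_ODE}: for every $\y\in Y^N_k$ we must check that $\gen^{\text{dual}}$ applied to $h$ at $\y$ equals the right-hand side of \eqref{free_eqn} evaluated at $\n(\y)$, once the latter is expressed in the $\y$-variables.

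First I would rewrite the free right-hand side \eqref{free_eqn} in $\y$-coordinates. The operator $\sum_{i=1}^k[\nabla_\av]_i$ acting on a symmetric-in-the-Weyl-chamber function, when pushed to $\y$-variables, produces exactly the $q$-TASEP summand of $\gen^{\text{dual}}$ in \eqref{dual_operator} \emph{provided} one is allowed to move freely across cluster walls of $\n$ --- and this is precisely what the first boundary condition in \eqref{boundary_and_integral_conditions} buys. This computation is already done in \cite{BorodinCorwinSasamoto2012} for the $q$-TASEP, so I would invoke it: the $\RR$-part of \eqref{free_eqn}, restricted to $\W^{k,N}_{\ge0}$ and subject to $([\nabla_\av]_i-q[\nabla_\av]_{i+1})u=0$ on cluster walls, reproduces the first sum of \eqref{dual_operator}. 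The genuinely new work is the $\LL$-part. Here I would carry out the analogous reduction: the operator $\sum_{i=1}^k[\nabla_\av^{-1}]_i$, being the formal inverse of $\sum_i[\nabla_\av]_i$, telescopes into a long-range jump when restricted to the chamber, and the \integral{} condition $([\nabla_\av^{-1}]_i-q^{-1}[\nabla_\av^{-1}]_{i+1})u=0$ on cluster walls lets one commute the partial summation across clusters. Matching coordinates $n_j=i$ in $\n$ with occupation numbers $y_i$, a cluster of $y_i$ equal indices at site $i$ contributes a geometric-type factor $q^{-y_i-\dots-y_{j-1}}$, and the jump of a single coordinate from a cluster at $j$ to a cluster at $i$ carries the weight $(q^{-y_j}-1)$; summing the telescoped contributions over all ways a single index can decrease from value $j$ to value $i<j$ should reproduce the second sum of \eqref{dual_operator} verbatim.

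Concretely the key steps, in order, are: (i) transfer of initial, $y_0$, and boundary data (trivial); (ii) recall the $q$-TASEP identity that the $\RR$-part of \eqref{free_eqn} plus the first boundary condition equals the $\RR$-sum of $\gen^{\text{dual}}$; (iii) set up the $\y$-coordinate bookkeeping for $\sum_i[\nabla_\av^{-1}]_i$, including careful treatment of the constant term $f(0)$ dropped by $\nabla_\av^{-1}\nabla_\av$, using condition (3) ($u\equiv0$ when $n_k=0$) to kill boundary leftovers; (iv) use the \integral{} condition to move the partial sums across cluster walls and collect coefficients of each $h(\y^{j,i})$; (v) check that the collected coefficient is exactly $\LL a_i^{-1}(q^{-y_j}-1)q^{-y_i-\dots-y_{j-1}}$, matching the second sum of \eqref{dual_operator}; (vi) conclude by Lemma \ref{lemma:unique_sol}.

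The main obstacle I expect is step (iv)--(v): the \integral{} conditions only assert annihilation on the \emph{two-body} walls $n_i=n_{i+1}$, whereas moving $\sum_i[\nabla_\av^{-1}]_i$ across a cluster of size larger than two, or past several clusters simultaneously, requires iterating the two-body relation and tracking the non-commutativity of $\nabla_\av^{-1}$ with coordinate shifts (recall $\nabla_\av^{-1}\nabla_\av f = f - f(0)$, so spurious $f(0)$-type terms appear at each application). Showing these spurious terms either cancel in pairs or vanish by condition (3) is the delicate combinatorial heart of the proof; it is the exact analogue, on the ``\integral'' side, of the classical Bethe-ansatz fact that two-body consistency propagates to all clusters, and I would organize it by induction on the number of clusters of $\n$, reducing each step to a single application of one of the relations in \eqref{boundary_and_integral_conditions}.
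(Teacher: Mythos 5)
Your plan follows the paper's proof essentially verbatim: transfer conditions (3)--(4) of Definition \ref{def:free} directly, reuse the $q$-TASEP computation of \cite{BorodinCorwinSasamoto2012} for the $\RR$-part, and for the $\LL$-part iterate the two-body \integral{} conditions cluster-by-cluster --- collapsing each cluster at $j$ to a single $\LL(1-q^{-y_j})[\nabla_\av^{-1}]_{b+y_j}$, peeling off the top summand to read off the coefficient of $u\big(t;\n(\y^{j,i})\big)$, and crossing the next cluster at the cost of a factor $q^{-y_{j-1}}$ --- which is exactly the recursion you outline in steps (iii)--(v), with Lemma \ref{lemma:unique_sol} completing the identification with $h$. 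The only discrepancy is minor: the spurious $f(0)$-type terms you anticipate do not actually arise, since $\nabla_\av^{-1}$ by definition terminates at $f(1)$, so condition (3) of Definition \ref{def:free} is needed only to transfer the $y_0>0$ boundary condition of Definition \ref{def:true}, not to cancel leftovers in the coefficient bookkeeping.
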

\begin{proof}
	Conditions (3) and (4) of Definition \ref{def:free} 
	directly lead to conditions (2) and (3) 
	of the solution to the true evolution equations
	(Definition \ref{def:true}).
	
	It remains to check that condition
	(1) of Definition \ref{def:true}
	is satisfied by 
	$u(t;\n(\y))$, where $u(t;\n)$ 
	solves
	the free evolution
	equations with $k-1$
	boundary and $k-1$
	\integral{} conditions. We will use \eqref{boundary_and_integral_conditions} to 
	rewrite \eqref{free_eqn} in the form 
	\eqref{true_ODE} (with $\gen^{\text{\rm{}dual}}$
	given by \eqref{dual_operator}). 
	Fix $\n\in\W^{k,N}_{\ge0}$ and let throughout
	the proof $\y=\y(\n)$ and $\n=\n(\y)$, see~\S \ref{sub:notation}.

	First, let us briefly recall 
	(see \cite{BorodinCorwinSasamoto2012})
	how one 
	deals with the summands in 
	\eqref{free_eqn} 
	corresponding to the right jumps.
	Fix any cluster
	of $\n$ of size, say, $c\ge1$, i.e.,
	\begin{align*}
		\n=(n_1\ge \ldots \ge n_{b}>\underbrace{n_{b+1}=\ldots=n_{b+c}}_
		{\text{cluster}}>n_{b+c+1}\ge \ldots\ge n_k\ge0).
	\end{align*}
	Clearly, $c=y_i$, $b=y_N+\ldots+y_{i+1}$, and $n_{b+1}=\ldots=n_{b+c}=i$ 
	for some $i=1,\ldots,N$.
	Combining summands corresponding to $r=b+1,\ldots,b+c$ in the first sum 
	in \eqref{free_eqn} and using the boundary conditions in \eqref{boundary_and_integral_conditions},
	we obtain
	\begin{align*}
		\RR(1-q)\sum_{r=b+1}^{b+c}
		[\nabla_\av^{\phantom{1}}]_{r}^{\phantom{1}}u(t;\n)
		=
		\RR(1-q)\sum_{r=b+1}^{b+c}
		q^{b+c-r}
		[\nabla_\av^{\phantom{1}}]_{b+c}^{\phantom{1}}u(t;\n)
		=\RR(1-q^{c})
		[\nabla_\av^{\phantom{1}}]_{b+c}^{\phantom{1}}u(t;\n).
	\end{align*}
	We readily see that 
	in terms of the $\y$ variables, the above expression
	is equal to 
	\begin{align*}
		\RR(1-q^{y_i})a_i
		\big(
		u\big(t;\n(\y^{i,i-1})\big)-u\big(t;\n(\y)\big)\big),
	\end{align*}
	which is one of the summands in the first sum in \eqref{dual_operator}
	corresponding to the cluster of components of $\n$ which are equal to 
	$i$.

	Now let us explain how one can rewrite the second sum in 
	\eqref{free_eqn} (which corresponds to the left jumps).
	Fix any $j\ge i$ for which $y_j\ge1$. Let us 
	calculate the coefficient by 
	$u\big(t;\n(\y^{j,i})\big)$ in the 
	right-hand side of \eqref{free_eqn}.
	This coefficient can come only from the part of the second sum
	corresponding to the cluster of components of $\n$
	which are equal to $j$.
	Using the \integral{} conditions
	\eqref{boundary_and_integral_conditions}, we 
	can rewrite it as
	(below $b=y_N+\ldots+y_{j+1}$)
	\begin{align*}
		\LL
		(1-q^{-1})\sum_{r=b+1}^{b+y_j}
		[\nabla_\av^{-1}]_{r}^{\phantom{1}}u(t;\n)&=
		\LL
		(1-q^{-1})\sum_{r=b+1}^{b+y_j}q^{r-b-y_j}
		[\nabla_\av^{-1}]_{b+y_j}^{\phantom{1}}u(t;\n)
		\\&=
		\LL
		(1-q^{-y_j})
		[\nabla_\av^{-1}]_{b+y_j}^{\phantom{1}}u(t;\n).
	\end{align*}

	If $j=i$, then we readily see from the above expression
	that the coefficient by 
	$u\big(t;\n(\y^{i,i})\big)=u\big(t;\n(\y)\big)$
	is 
	$\LL a_i^{-1}
	(q^{-y_j}-1)$,
	as it should be according to \eqref{dual_operator}.

	Assume now that $i<j$, and also that $y_{j-1}\ge1$. This means that we can rewrite
	the above expression as
	\begin{align}
		\LL
		(1-q^{-y_{j}})
		[\nabla_\av^{-1}]_{b+y_j}^{\phantom{1}}u(t;\n)
		=\LL
		(q^{-y_{j}}-1)
		\left(a_j^{-1}u(t;\n)+
		[\nabla_\av^{-1}]_{b+y_j+1}^{\phantom{1}}u\big(t;\n(\y^{j,j-1})\big)
		\right).
		\label{free_proof}
	\end{align}
	Indeed, we have simply removed one of the summands from 
	the expression
	$[\nabla_\av^{-1}]_{b+y_j}^{\phantom{1}}u(t;\n)$
	using the definition of $\nabla_\av^{-1}$.
	Now, by \eqref{boundary_and_integral_conditions}, we clearly can write
	the application of 
	$[\nabla_\av^{-1}]_{b+y_j+1}^{\phantom{1}}$	
	to $u\big(t;\n(\y^{j,j-1})\big)$
	as the application of $[\nabla_\av^{-1}]_{b+y_j+y_{j-1}}^{\phantom{1}}$
	times the factor of $q^{-y_{j-1}}$. 
	This observation together with \eqref{free_proof} implies that the coefficient 
	by $u\big(t;\n(\y^{j,j-1})\big)$
	in the right-hand side of \eqref{free_eqn} is equal to 
	$\LL a_{j-1}^{-1}(q^{-y_{j}}-1)q^{-y_{j-1}}$, as it should be 
	by \eqref{dual_operator}. 

	One can check in a similar manner 
	that for any $i<j$,
	the coefficient by 
	$u\big(t;\n(\y^{j,i})\big)$ 
	in the right-hand side of \eqref{free_eqn} 
	is the same as dictated by
	\eqref{dual_operator}.
	This concludes the proof.
\end{proof}

The next statement is a straightforward consequence of Theorems
\ref{thm:true} and \ref{thm:free}:
\begin{corollary}\label{cor:general_moment_formula}
	For $q$-PushASEP started from any fixed or random initial configuration 
	$\x(0)=\x$,
	$\E\left(
	\prod_{i=1}^{k}q^{x_{n_i}(t)+n_i}
	\right)=u(t;\n)$,
	where $u(t;\n)$ solves the free evolution equations
	with $k-1$ boundary and $k-1$ \integral{} conditions
	(Definition \ref{def:free}) with initial data 
	inside the Weyl chamber $\n\in\W^{k,N}_{\ge0}$
	given by 
	$u(0;\n)=\E\left(
	\prod_{i=1}^{k}q^{x_{n_i}(0)+n_i}
	\right)$.
\end{corollary}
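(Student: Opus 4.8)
The plan is to obtain the statement by chaining Theorem~\ref{thm:true} and Theorem~\ref{thm:free} through the uniqueness assertion of Lemma~\ref{lemma:unique_sol}; the only work beyond citing those results is the bookkeeping between the $\n$-coordinates on the Weyl chamber and the $\y$-coordinates on $Y^N$. First I would record the elementary identity that for every $\n\in\W^{k,N}_{\ge0}$ and every configuration $\x\in X^N$ one has $\prod_{i=1}^{k}q^{x_{n_i}+n_i}=\prod_{i=0}^{N}q^{(x_i+i)y_i(\n)}=H(\x,\y(\n))$, where the convention $q^{x_0+0}=0$ (because $x_0=+\infty$) is consistent with the convention $H(\x,\y)=0$ for $y_0>0$; in particular both sides vanish exactly when some $n_i=0$. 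Since the $q$-PushASEP preserves the order of particles, $\x(t)\in X^N$ for all $t\ge0$, so this identity applies verbatim with $\x$ replaced by $\x(t)$ and, after averaging, with a random initial configuration.

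Next I would check that the data match up. Evaluating the above identity at $t=0$ and taking expectations shows that the partial initial condition of Definition~\ref{def:free}(4), namely $u(0;\n)=\E\bigl(\prod_{i=1}^{k}q^{x_{n_i}(0)+n_i}\bigr)$ for $\n\in\W^{k,N}_{\ge0}$, coincides with $\E\bigl(H(\x,\y(\n))\bigr)=h_0(\y(\n))$, which is precisely the initial datum fed into Theorems~\ref{thm:true} and~\ref{thm:free} (for a random $\x$ one takes $h_0(\y)=\E(H(\x,\y))$, as remarked after Theorem~\ref{thm:true}). Conditions (2) and (3) of the true evolution equations are inherited from conditions (3) and (4) of Definition~\ref{def:free} exactly as in the proof of Theorem~\ref{thm:free}, so no additional argument is needed there.

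With the data identified, the chain closes as follows. Theorem~\ref{thm:free} gives $u(t,\n(\y))=h(t,\y)$ for all $\y\in Y^N_k$, where $h$ is \emph{the} solution of the true evolution equations with initial data $h_0$, uniqueness being Lemma~\ref{lemma:unique_sol}. On the other hand Theorem~\ref{thm:true} says that $\E\bigl(H(\x(t),\y)\bigr)$ is itself a solution of the true evolution equations with the same initial data $h_0$; by uniqueness the two agree, so $u(t,\n(\y))=\E\bigl(H(\x(t),\y)\bigr)$. Specializing to $\y=\y(\n)$ for $\n\in\W^{k,N}_{\ge0}$ and invoking the identity of the first paragraph with $\x(t)$ in place of $\x$ yields $u(t;\n)=\E\bigl(\prod_{i=1}^{k}q^{x_{n_i}(t)+n_i}\bigr)$, which is the claim.

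I do not anticipate a real difficulty here: the substance is entirely contained in Theorems~\ref{thm:true} and~\ref{thm:free}, and what remains is coordinate bookkeeping together with care about the $y_0>0$ / $n_i=0$ convention. The one subtlety worth flagging is that this corollary does not by itself produce a solution $u$ of the free evolution equations; it asserts only that any such $u$ computes the moments (equivalently, that the restriction of a solution to $\bigcup_k\W^{k,N}_{\ge0}$ is uniquely determined, even if $u$ on all of $\Z^{k}_{\ge0}$ need not be). Existence of $u$ for the step initial condition is supplied separately in \S\ref{sec:nested_contour_integral_formulas_for_the_q_pushasep} by exhibiting the explicit nested contour integral of Theorem~\ref{thm:moments}, and the statement of the corollary should be read with that in mind.
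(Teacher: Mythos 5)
Your proposal is correct and follows exactly the route the paper intends: the corollary is stated there as a direct consequence of Theorems \ref{thm:true} and \ref{thm:free} (with uniqueness from Lemma \ref{lemma:unique_sol}), and your argument just spells out the same chain together with the $\n\leftrightarrow\y$ bookkeeping and the $y_0>0$ convention. The closing remark that existence of a solution $u$ is supplied separately by the contour integral of Theorem \ref{thm:moments} is a sensible clarification but not a deviation from the paper's reasoning.
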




\section{Nested contour integral formulas for the $q$-PushASEP} 
\label{sec:nested_contour_integral_formulas_for_the_q_pushasep}

\subsection{Moments: proof of Theorem \ref{thm:moments}} 
\label{sub:moment_formulas_proof_of_theorem_}

Here we will use Corollary \ref{cor:general_moment_formula} to 
prove Theorem~\ref{thm:moments}. 
It suffices to 
check that the 
expression for the moments
of the $q$-PushASEP given by the
right-hand side 
of \eqref{moments} (denote it
by $m(t;\n)$) satisfies conditions (1)--(4) of Definition~\ref{free_eqn}.
Let us verify these conditions:

\medskip

(1) The time derivative in the left-hand side of \eqref{free_eqn} 
affects only the factor $\prod_{j=1}^{k}\frac{\Pi_t(qz_j)}{\Pi_t(z_j)}$
inside the nested integral in $m(t;\n)$, 
which leads to the multiplication 
of the integrand by
\begin{align}\label{1_check}
	\RR(q-1)\sum_{j=1}^{k}z_j+
	\LL(q^{-1}-1)\sum_{j=1}^{k}z_j^{-1}.
\end{align}
Let us check that the application of the 
operators in the right-hand side of \eqref{free_eqn}
also gives the factor \eqref{1_check}.

First, note that for each $j=1,\ldots,k$,
the application of $[\nabla_\av^{\phantom{1}}]_{j}^{\phantom{1}}$
leads to the replacement of 
$\prod_{i=1}^{n_j}\frac{a_i}{a_i-z_j}$ 
by
\begin{align*}
	a_{n_j}\left(\prod_{i=1}^{n_j-1}\frac{a_i}{a_i-z_j}-\prod_{i=1}^{n_j}\frac{a_i}{a_i-z_j}
	\right)
	=-z_j\prod_{i=1}^{n_j}\frac{a_i}{a_i-z_j}.
\end{align*}
We see that we have matched summands involving the parameter $\RR$ in \eqref{1_check}.

Now let us consider the summands in the right-hand side of
\eqref{free_eqn} involving the parameter~$\LL$. 
For simpler notation denote $n=n_j$ and $z=z_j$,
and consider the application of the operator 
$\nabla_\av^{-1}$ to $\prod_{i=1}^{n}\frac{a_i}{a_i-z}$. It is given by
\begin{align*}
	-a_n^{-1}\prod_{i=1}^{n}\frac{a_i}{a_i-z}
	-a_{n-1}^{-1}\prod_{i=1}^{n-1}\frac{a_i}{a_i-z}
	-\ldots-a_{1}^{-1}\frac{a_1}{a_1-z}
	=
	-\prod_{i=1}^{n}\frac{a_i}{a_i-z}
	\cdot
	\sum_{j=1}^{n}a_{j}^{-1}
	\prod_{r=j+1}^{n}\frac{a_r-z}{a_r}.
\end{align*}
Let, by agreement, $a_0=a_{-1}=a_{-2}=\ldots=1$. Let us add to the above sum 
over $j$ more summands corresponding to 
$j$ running from $-\infty$ to $0$, that is, the expression
\begin{align*}
	\sum_{j=-\infty}^{0}
	\prod_{r=j+1}^{n}\frac{a_r-z}{a_r}=
	\frac{1}{z}\prod_{r=1}^{n}\frac{a_r-z}{a_r}.
\end{align*}
In view of the nested contour integration
in \eqref{moments}, we see that these additional summands 
(when multiplied by 
$\prod_{i=1}^{n}\frac{a_i}{a_i-z}$)
do not introduce 
any residues in $z$. 
Thus, modulo the contour integration, we can rewrite 
the application of 
$\nabla_\av^{-1}$ to $\prod_{i=1}^{n}\frac{a_i}{a_i-z}$
as
\begin{align*}
	-\prod_{i=1}^{n}\frac{a_i}{a_i-z}
	\left(
	\frac{1}{z}
	\prod_{r=1}^{n}\frac{a_r-z}{a_r}+
	\sum_{j=1}^{n}a_{j}^{-1}
	\prod_{r=j+1}^{n}\frac{a_r-z}{a_r}
	\right).
\end{align*}
To finish the check of (1) 
by matching summands involving the parameter $\LL$ in \eqref{1_check},
it suffices to establish
the following lemma:
\begin{lemma}
	We have
	\begin{align}\label{1_check_lemma}
		\frac{1}{z}
		\prod_{r=1}^{n}\frac{a_r-z}{a_r}+
		\sum_{j=1}^{n}a_{j}^{-1}
		\prod_{r=j+1}^{n}\frac{a_r-z}{a_r}=\frac{1}{z}.
	\end{align}
\end{lemma}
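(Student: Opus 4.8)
The plan is to prove the identity \eqref{1_check_lemma} by induction on $n$, treating the left-hand side as a telescoping sum. First I would set
\[
S_n := \frac{1}{z}\prod_{r=1}^{n}\frac{a_r-z}{a_r}+\sum_{j=1}^{n}a_{j}^{-1}\prod_{r=j+1}^{n}\frac{a_r-z}{a_r},
\]
and check the base case $n=0$ directly: the empty products are $1$, the empty sum is $0$, so $S_0=1/z$, as desired. For the inductive step, I would split off the common factor $\frac{a_n-z}{a_n}$ from the first term and from all summands with $j\le n-1$, leaving the single summand $j=n$, which contributes $a_n^{-1}$ (empty product). This gives the recursion
\[
S_n=\frac{a_n-z}{a_n}\,S_{n-1}+\frac{1}{a_n}.
\]

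Assuming $S_{n-1}=1/z$ by the inductive hypothesis, one then computes
\[
S_n=\frac{a_n-z}{a_n}\cdot\frac1z+\frac1{a_n}=\frac{1}{z}-\frac{1}{a_n}+\frac{1}{a_n}=\frac1z,
\]
which closes the induction and proves \eqref{1_check_lemma}. Alternatively, and perhaps more transparently, one can observe that the left-hand side is a telescoping sum: writing $P_j:=\prod_{r=j+1}^{n}\frac{a_r-z}{a_r}$ (so $P_n=1$ and $P_0=\frac1z\prod_{r=1}^n\frac{a_r-z}{a_r}\cdot z = \prod_{r=1}^n\frac{a_r-z}{a_r}$... more precisely $\frac1z$ times the $n$-fold product equals $\frac1z P_0$ with $P_0=\prod_{r=1}^n\frac{a_r-z}{a_r}$), one has the identity $a_j^{-1}P_j = \frac1z(P_{j-1}-P_j)$ because $\frac1z\bigl(\frac{a_j-z}{a_j}-1\bigr)P_j = -\frac1{a_j}P_j\cdot\frac{z}{z}$; summing $j$ from $1$ to $n$ telescopes to $\frac1z(P_0-P_n)=\frac1z P_0-\frac1z$, and adding back the first term $\frac1z P_0$... wait, one must be careful with signs, so the clean route is to note $a_j^{-1}P_j=\frac1z(P_j-P_{j-1}\cdot\tfrac{a_j}{a_j-z}\cdot\tfrac{a_j-z}{a_j})$ — I would simply present the induction, which is unambiguous.

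There is essentially no obstacle here: the identity is an elementary algebraic telescoping statement, and the only thing to be slightly careful about is bookkeeping with the empty-product and empty-sum conventions (and the convention $a_0=a_{-1}=\cdots=1$ introduced just above, which in fact plays no role in \eqref{1_check_lemma} itself since all indices $r,j$ run over $\{1,\dots,n\}$). The induction on $n$ with the one-line recursion $S_n=\frac{a_n-z}{a_n}S_{n-1}+\frac1{a_n}$ is the cleanest presentation and I would go with that.
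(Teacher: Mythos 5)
Your induction via the recursion $S_n=\frac{a_n-z}{a_n}S_{n-1}+\frac{1}{a_n}$ with base case $S_0=1/z$ is correct and is exactly the argument in the paper, which states the same recursion (in the form $S_{n-1}\frac{a_n-z}{a_n}=S_n-a_n^{-1}$) and concludes. The muddled telescoping aside is unnecessary, but the induction you settle on is the paper's proof.
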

\begin{proof}
	Denote by $S_n$ the left-hand side of 
	\eqref{1_check_lemma}. Then one can readily see that
	\begin{align*}
		S_0=1/z;
		\qquad
		S_{n-1}\frac{a_n-z}{a_n}=S_n-a_n^{-1},\quad n\ge1,
	\end{align*}
	which implies the claim.
\end{proof}

(2) The argument is almost the same for the boundary and 
the \integral{} conditions.
As we have seen in the above check of (1),
the (boundary condition) 
operator $[\nabla_\av^{\phantom{1}}]_{i}^{\phantom{1}}
-q\cdot [\nabla_\av^{\phantom{1}}]_{i+1}^{\phantom{1}}$
applied to $m(t;\n)$, multiplies 
the integrand
by 
$-(z_{i}-qz_{i+1})$. The 
(\integral{} condition) operator
$[\nabla_\av^{{-1}}]_{i}^{\phantom{1}}
-q^{-1}\cdot [\nabla_\av^{{-1}}]_{i+1}^{\phantom{1}}$
leads to the multiplication of the integrand in 
$m(t;\n)$ by
\begin{align*}
	-\left(\frac{1}{z_i}-q^{-1}\frac{1}{z_{i+1}}\right)=
	\frac{z_i-qz_{i+1}}{q z_iz_{i+1}}.
\end{align*}
In both cases, the factor $z_i-qz_{i+1}$ cancels one of the denominators
in $\prod_{1\le A<B\le k}\frac{z_A-z_B}{z_A-qz_B}$.
This allows us to deform (without encountering any poles) the $z_i$ and $z_{i+1}$
contours so that they coincide. 
Since $n_i=n_{i+1}$, this means that
we may write 
both
$\left([\nabla_\av^{\phantom{1}}]_{i}^{\phantom{1}}
-q\cdot [\nabla_\av^{\phantom{1}}]_{i+1}^{\phantom{1}}\right)m(t;\n)$
and $\left([\nabla_\av^{{-1}}]_{i}^{\phantom{1}}
-q^{-1}\cdot [\nabla_\av^{{-1}}]_{i+1}^{\phantom{1}}\right)m(t;\n)$ in the form
\begin{align*}
	\int\int(z_i-z_{i+1})G(z_i,z_{i+1})dz_{i}dz_{i+1}
\end{align*}
for a suitable function $G(z_i,z_{i+1})$ 
involving integration in all variables except
$z_i$ and $z_{i+1}$.
The function $G$ (in both cases)
is symmetric in $z_i,z_{i+1}$, which 
implies that the above integral is 
identically zero.
Thus, the second condition in Definition \ref{def:free} is also checked.

\smallskip

(3) To check the third condition, 
observe that
if $n_k=0$, then there is no pole $z_k=1$ 
in the integral over $z_k$ in \eqref{moments}. Thus, 
the nested integral vanishes.

\smallskip

(4) Because for the step initial
condition $x_i(0)=-i$, the left-hand side 
of \eqref{moments} is identically one. 
We thus need to show that $m(0;\n)\equiv1$.
This follows from the residue calculus. Expanding the $z_1$ contour
to infinity, one encounters only the pole at $z_1=0$ (clearly, $z_1=\infty$ is not a pole
because of the factors $a_i/(a_i-z_1)$). The residue at $z_1=0$
is equal to $-q^{-(k-1)}$. After having expanded the $z_1$
contour, the remaining integral is the same as in 
\eqref{moments} but in $k-1$ variables. 
Thus, repeating this proceedure, we see that the fourth condition is also verified.

\medskip

By virtue of Corollary \ref{cor:general_moment_formula},
this completes the proof of Theorem \ref{thm:moments}.\qed


\subsection{Discussion of the Fredholm determinantal formula (Conjecture \ref{claim:Fredholm})} 
\label{sub:discussion_of_the_fredholm_determinantal_formula}

Assume that $\LL>0$.
Let us first discuss the growth of the moments of the $q$-PushASEP. 
\begin{lemma}\label{lemma:growth_of_moments}
	For any $k\ge1$ and $\n\in\W^{k,N}_{\ge0}$, 
	\begin{align*}
		\E\left(\prod_{i=1}^{k}q^{x_{n_i}(t)+n_i}\right)
		\ge \mathrm{const}\cdot e^{\LL a_1^{-1}t \cdot e^{k\ln(1/q)}}.
	\end{align*}
	Here $\mathrm{const}$ is some positive constant,
	and $\x(t)$ is the $q$-PushASEP started from an arbitrary 
	(non-random) initial configuration $\x(0)=\x$.
\end{lemma}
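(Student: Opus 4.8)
The goal is a lower bound on $\E\bigl(\prod_{i=1}^{k}q^{x_{n_i}(t)+n_i}\bigr)$ that grows doubly-exponentially in $k$, confirming that the moments blow up too fast to determine the law. The plan is to reduce the claim to a statement about a single tagged particle and then to a tractable one-dimensional comparison process. First I would observe that since $0<q<1$ and all factors $q^{x_{n_i}(t)+n_i}$ are positive, Jensen's inequality (or simply keeping one of them) is not what we want; instead the natural move is to restrict attention to the single worst term. Concretely, $\prod_{i=1}^{k}q^{x_{n_i}(t)+n_i}\ge q^{\sum_{i=1}^k (x_{n_i}(t)+n_i)}$ is an identity, so it suffices to lower bound $\E\bigl(q^{\sum_{i=1}^{k}(x_{n_i}(t)+n_i)}\bigr)$, and since $x_{n_i}(t)+n_i\le x_1(t)+1$ for each $i$ is false in general — the labels decrease — one must be a little careful. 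The cleanest route is to use $x_{n_i}(t)\ge x_N(t)$ for every $i$, giving $\sum_{i=1}^{k}(x_{n_i}(t)+n_i)\le k(x_N(t)+N)$ (using $n_i\le N$), hence
\begin{align*}
	\E\left(\prod_{i=1}^{k}q^{x_{n_i}(t)+n_i}\right)\ge \E\left(q^{k(x_N(t)+N)}\right).
\end{align*}

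Next I would couple $x_N(t)$ from below by a pure left-moving process. As noted in the proof of Lemma \ref{lemma:moments_finite}, the leftmost particle $x_N(t)$ can be pushed arbitrarily far left; in particular, ignoring all right jumps and all pushes except those generated by the left jumps of $x_1$, the displacement $x_N(0)-x_N(t)$ stochastically dominates a Poisson random variable of rate $\LL a_1^{-1}$ (each left jump of $x_1$ propagates a push all the way down to $x_N$ with positive probability; to get a clean rate $\LL a_1^{-1}$ one can instead note that left jumps of $x_N$ itself already occur at rate $\LL a_N^{-1}$, but to obtain the $a_1^{-1}$ in the statement one uses the push chain from $x_1$). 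More carefully: condition on the (rate $\LL a_1^{-1}$) Poisson clock of left jumps of the \emph{first} particle $x_1$; on the event that at each such epoch every intermediate gap happens to be zero, the push reaches $x_N$, and the event that a fixed finite configuration relaxes to have all gaps zero before the next clock ring has positive probability bounded below uniformly. This gives $x_N(t)\le x_N(0)-\xi_t+R_t$ where $\xi_t\sim\mathrm{Pois}(c\,\LL a_1^{-1} t)$ for some $c>0$ and $R_t\ge0$ is independent garbage we only need an upper tail bound on; absorbing constants,
\begin{align*}
	\E\left(q^{k(x_N(t)+N)}\right)\ge \mathrm{const}\cdot \E\left(q^{-k\xi_t}\right)=\mathrm{const}\cdot e^{c\LL a_1^{-1}t\,(q^{-k}-1)}.
\end{align*}

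Since $q^{-k}-1\ge \tfrac12 q^{-k}=\tfrac12 e^{k\ln(1/q)}$ for $k$ large, and the finitely many small $k$ are harmless after adjusting the constant, this yields $\E\bigl(\prod_{i=1}^{k}q^{x_{n_i}(t)+n_i}\bigr)\ge \mathrm{const}\cdot e^{\LL a_1^{-1}t\cdot e^{k\ln(1/q)}}$, as claimed (the extra factor $c$ and the $\tfrac12$ can be swept into the constant or, if one wants the exact rate $\LL a_1^{-1}$ in the exponent, by using a sharper coupling that makes the effective Poisson rate exactly $\LL a_1^{-1}$ — e.g.\ realizing the push chain without loss). The main obstacle is making the stochastic domination $x_N(0)-x_N(t)\succeq \mathrm{Pois}(\LL a_1^{-1}t)$ fully rigorous: one must track the instantaneous pushing mechanism and argue that the left displacement of $x_N$ dominates a Poisson variable with the stated rate regardless of the initial configuration, which requires either an explicit graphical construction of the pushes or a monotonicity/coupling argument for the pushing dynamics. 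Everything else is elementary: the deterministic inequality $x_{n_i}(t)\ge x_N(t)$, the exact exponential-moment formula $\E(q^{-k\xi})=e^{c\LL a_1^{-1}t(q^{-k}-1)}$ for a Poisson variable, and the bound $q^{-k}-1\gtrsim e^{k\ln(1/q)}$.
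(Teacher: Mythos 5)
Your first reduction goes in the wrong direction, and this breaks the whole argument. Because the particles are strictly decreasing integers, $x_{n}(t)\ge x_N(t)+(N-n)$, i.e.\ $x_{n_i}(t)+n_i\ge x_N(t)+N$ for every $i$; since $0<q<1$ this gives $\prod_{i=1}^{k}q^{x_{n_i}(t)+n_i}\le q^{k(x_N(t)+N)}$, not $\ge$. (Your claimed step ``$x_{n_i}(t)\ge x_N(t)$ and $n_i\le N$ imply $\sum_i(x_{n_i}(t)+n_i)\le k(x_N(t)+N)$'' combines a lower bound on one term with an upper bound on the other and is false.) The quantity $x_n(t)+n$ is \emph{maximized}, not minimized, at $n=1$: one has $x_n(t)+n\le x_1(t)+1$, so the correct elementary bound is $\prod_{i=1}^{k}q^{x_{n_i}(t)+n_i}\ge q^{k(x_1(t)+1)}$. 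This is exactly what the paper does, and it makes all of your coupling machinery unnecessary: as noted at the end of \S\ref{sub:definition_of_the_process}, the first particle evolves freely as a two-sided continuous-time walk, so $x_1(t)=x_1(0)+\xi-\eta$ with $\xi,\eta$ independent Poisson of parameters $\RR a_1 t$ and $\LL a_1^{-1}t$, and $\E\, q^{k(\xi-\eta)}=e^{\RR a_1 t(q^{k}-1)+\LL a_1^{-1}t(q^{-k}-1)}$ gives the claim exactly, with the $-1$'s and $q^{k}$ contributing only $k$-independent constants.

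Even setting aside the reversed inequality, the rest of your route would not deliver the stated bound. Your coupling for $x_N$ only produces a Poisson rate $c\,\LL a_1^{-1}$ with an unspecified $c<1$ (and the claim that the ``all intermediate gaps equal zero'' relaxation event has probability bounded below uniformly in the current configuration is unjustified, since the gaps are unbounded and grow under the right jumps). A constant $c<1$ sitting inside the double exponential cannot be ``swept into the constant'': $e^{c\LL a_1^{-1}t\,q^{-k}}$ and $e^{\LL a_1^{-1}t\,q^{-k}}$ differ by a factor that diverges with $k$, so the prefactor $\mathrm{const}$ (which must not depend on $k$) cannot absorb it. So either you need the exact rate $\LL a_1^{-1}$ (which the push-chain argument does not give), or you must switch to the top particle, where the rate $\LL a_1^{-1}$ comes out exactly and for free.
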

\begin{proof}
	Clearly, 
	\begin{align*}
		\prod_{i=1}^{k}q^{x_{n_i}(t)+n_i}\ge
		q^{k(x_1(t)+1)}=\mathrm{const}\cdot q^{k(\xi-\eta)},
	\end{align*}
	where $\xi$ and $\eta$ are independent Poisson 
	random variables with parameters $\RR a_1 t$ and $\LL a_1^{-1}t$, 
	respectively (cf. the end of \S \ref{sub:definition_of_the_process}). 
	The constant in front accounts for the initial
	condition $x_1(0)$.
	We have
	\begin{align*}
		\E q^{k(\xi-\eta)}=e^{\RR a_1 t(q^{k}-1)+\LL a_1^{-1}t(q^{-k}-1)},
	\end{align*}
	which yields the claim.
\end{proof}

Let us now explain how one could \emph{formally} 
obtain the Fredholm determinant (Conjecture \ref{claim:Fredholm}) 
from the moment formulas of Theorem \ref{thm:moments} that
were proved in \S \ref{sec:true_evolution_equations}--\ref{sec:free_q_pushasep}.
Using the $q$-Binomial theorem,
write the $q$-Laplace transform as 
\begin{align*}
	\E\left[\frac{1}{(\zeta q^{x_n(t)+n};q)_{\infty}}\right]=
	\E\left[\sum_{k=0}^{\infty}
	\frac{\zeta^{k}q^{k(x_n(t)+n)}}{(q;q)_{k}}
	\right].
\end{align*}
This identity is rigorous.
Next, let us interchange the expectation
and the summation. By Lemma \ref{lemma:growth_of_moments}, 
we get a \emph{divergent} series
(of course this is because the interchange of expectation and summation is not justifiable):
\begin{align}\label{divergent}
	\sum_{k=0}^{\infty}
	\frac{\zeta^{k}\E(q^{k(x_n(t)+n)})}{(q;q)_{k}}.
\end{align}
However, plugging nested contour integral expressions
for the moments $\E(q^{k(x_n(t)+n)})$ 
afforded by Theorem \ref{thm:moments},
it is possible to formally write 
\eqref{divergent}
as a Fredholm determinant. 
A general scheme for doing this is explained in \S3.1 of \cite{BorodinCorwinSasamoto2012} and was 
initially developed in \S3.2 of \cite{BorodinCorwin2011Macdonald}. 
It amounts to deforming (and accounting for residues coming from this deformation) 
the nested contours 
in \eqref{moments}
so that they all become a small circle 
around $z=1$.
This is a rigorous operation, see
\cite[Prop. 3.2]{BorodinCorwinSasamoto2012}.
Then one should reorder summands in \eqref{divergent}, and also use the 
Mellin-Barnes summation formula. These two latter operations 
may not be done in a rigorous way in our situation.\footnote{When $\LL=0$,
i.e., for the $q$-TASEP, all operations are valid, see
\cite{BorodinCorwin2011Macdonald}
and \cite{BorodinCorwinSasamoto2012}.}
However, applied to the divergent series \eqref{divergent}, these steps yield a 
valid Fredholm determinantal expression of Conjecture \ref{claim:Fredholm}.

It is possible that Conjecture \ref{claim:Fredholm} 
(which we have formally argued for above) can be rigorously proved with 
the help of the algebraic framework of Macdonald processes \cite{BorodinCorwin2011Macdonald}. 
Namely, one may be able to show
(in a manner similar to 
\cite{BCGS2013}, \cite{BorodinCorwinFerrariVeto2013})
that
identity \eqref{Fred_det_identity} 
is a specialization of an algebraic identity
which in turn
can be established 
without running into convergence issues.
Then \eqref{Fred_det_identity} arises 
for certain particular values of parameters.
Another possible way of resolving the convergence issues
is to pass to a suitable discrete-time regularization,
cf. the discussion in \S \ref{sub:fredholm_determinant}.



\appendix

\section{Dynamics on two-dimensional interlacing arrays} 
\label{sec:two_dimensional_dynamics}

Here we briefly explain how the $q$-PushASEP 
arises as a one-dimensional 
marginal of a certain two-dimensional 
stochastic Markov dynamics on interlacing arrays of particles. 
This two-dimensional dynamics may be 
constructed
as an interpolation
between the ``push-block'' dynamics of \cite[\S2.3.3]{BorodinCorwin2011Macdonald}
(see also Dynamics 1 in \cite[\S5.5]{BorodinPetrov2013NN}), and 
the $q$-version of the dynamics driven by row insertion RSK algorithm
(Dynamics 8 in \cite[\S8.2.1]{BorodinPetrov2013NN}).\footnote{There is no unique way of
defining a dynamics on 
two-dimensional interlacing arrays
with these properties. For instance, the 
``push-block'' dynamics may be replaced by the 
dynamics coming from the 
$q$-version of the Robinson-Schensted column insertion algorithm
introduced in \cite{OConnellPei2012}.
See also
\cite{BorodinPetrov2013NN}
for more examples and 
a general discussion. }
Note that the latter dynamics has to be reflected, i.e., the particles
under this dynamics
must jump to the left instead of jumping to the right.
Let us now proceed to the definition of the 
two-dimensional dynamics.

The state space of the two-dimensional dynamics is the set of triangular arrays 
of interlacing particles which have integer coordinates (see Fig.~\ref{fig:la_interlacing} for an example):
\begin{align*}
	\boldsymbol\la=\{\la^{(k)}_{j}\in\Z,\; 1\le j\le k\le N\colon \la^{(k)}_{j}\le\la^{(k-1)}_{j-1}\le\la^{(k)}_{j-1}\}.
\end{align*}
Each particle $\la^{(k)}_{j}$ can jump either to the right or to the left by one.
\begin{figure}[htbp]
	\begin{center}
		\begin{tabular}{cc}
			\includegraphics[width=171pt]{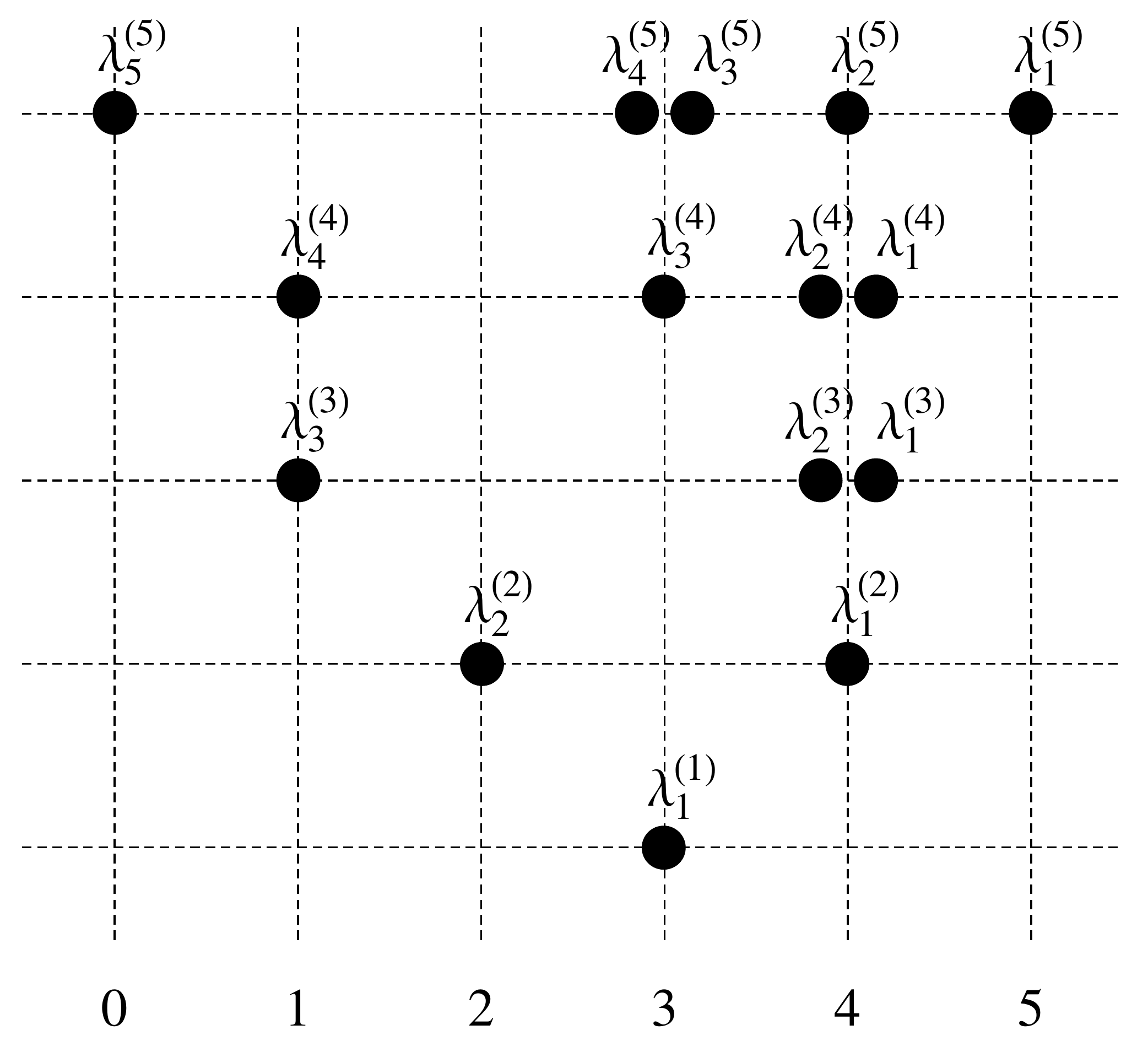}
			&
			\includegraphics[width=171pt]{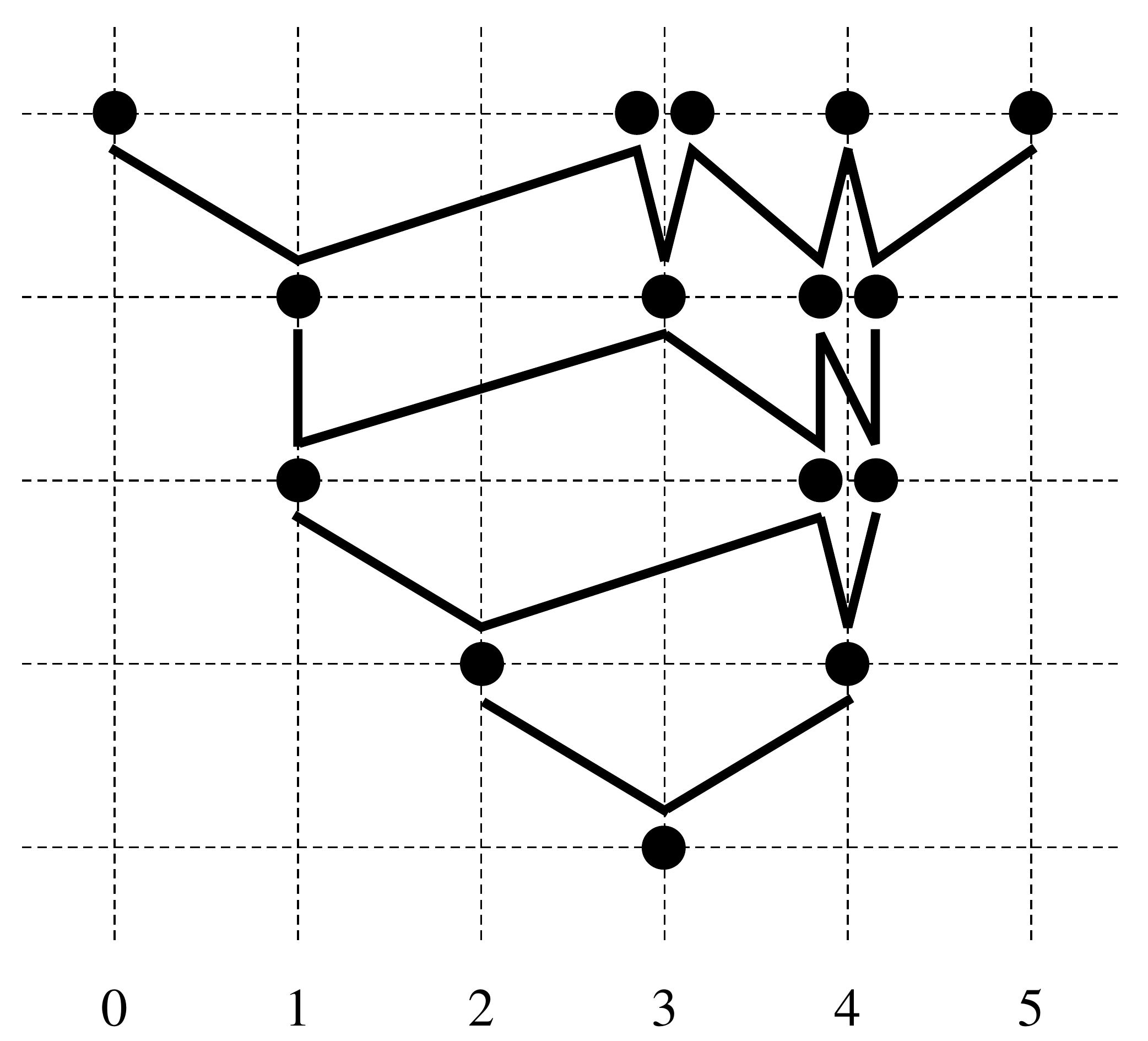}
		\end{tabular}
	\end{center}
  	\caption{Particle configuration $\boldsymbol\la$
  	and a visualization of the interlacing property.}
  	\label{fig:la_interlacing}
\end{figure}

The right jumps are described as follows. 
Each particle $\la^{(k)}_{j}$ has an independent exponential clock
with rate 
\begin{align*}
	\RR a_k \frac{(1-q^{\la^{(k-1)}_{j-1}-\la^{(k)}_{j}})(1-q^{\la^{(k)}_{j}-\la^{(k)}_{j+1}+1})}
	{1-q^{\la^{(k)}_{j}-\la^{(k-1)}_{j}+1}}.
\end{align*}
When the clock of $\la^{(k)}_{j}$
rings, the particle jumps to the right by one. If this jump of $\la^{(k)}_{j}$
would break the interlacing with upper particles, i.e., if $\la^{(k)}_{j}=\la^{(k+1)}_{j}=\ldots=\la^{(k+m)}_{j}$
(for some $m\ge1$), 
then all the particles $\la^{(k+1)}_{j},\ldots,\la^{(k+m)}_{j}$
are instantaneously pushed to the right by one.\footnote{
This mechanism of instantaneous pushes is built into the jump 
rates. Indeed, if the interlacing is broken, then
the higher particles have infinite
jump rates due to vanishing denominator.
Moreover, if the jump of some
$\la^{(k)}_{j}$ would break the interlacing with lower particles, then 
the rate assigned to this jump is equal to zero.}

The left jumps are different. Only the leftmost particles $\la^{(k)}_{k}$
can independently jump to the left by one. At level $k$ of the 
array the independent jumps of left particles happen at rate $\LL a_k^{-1}$.
When any particle $\la^{(k-1)}_{j}$ moves to the left by one
(independently or due to a push), it instantaneously 
forces one of its two immediate upper neighbors, 
$\la^{(k)}_{j+1}$ or $\la^{(k)}_{j}$, to move to the left by one
with probabilities $\ell$ and $1-\ell$, respectively, where
\begin{align*}
	\ell=q^{\la^{(k-1)}_{j}-\la^{(k)}_{j+1}}
	\frac
	{1-q^{\la^{(k)}_{j+1}-\la^{(k-1)}_{j+1}}}
	{1-q^{\la^{(k-1)}_{j}-\la^{(k-1)}_{j+1}}}
\end{align*}
(here $\la^{(k-1)}_{j}$ denotes the position of the particle \emph{before} the move).

In the description of the dynamics, all factors of the form $(1-q^{\cdots})$ 
having nonexistent indices are set to be equal to one.
One can readily see that the leftmost particles under this two-sided dynamics
on two-dimensional interlacing arrays
marginally evolve as a \emph{Markov} process. 
In the shifted coordinates
$x_n(t):=\la^{(n)}_{n}(t)-n$, where $n=1,\ldots,N$, 
the evolution 
of the particles
is governed by our $q$-PushASEP.

\medskip

The fixed-time distributions of the two-dimensional dynamics
$\boldsymbol\la(t)$ described above are probability measures 
on interlacing arrays.
Let the initial configuration be the 
\emph{densely packed} one, i.e.,
$\la^{(k)}_{j}(0)=0$ for all $1\le j\le k\le N$.
This configuration corresponds to the step initial
condition for the $q$-PushASEP.

After time $t\ge0$,
the distribution of 
$\boldsymbol\la(t)$
generalizes the
(one-sided) Macdonald processes of 
\cite{BorodinCorwin2011Macdonald}, \cite{BCGS2013}.
The second Macdonald parameter which
is usually denoted by 
$t$ is set to zero
(so that there is no notational conflict with the time parameter);
such Macdonald processes 
are also referred to as the $q$-Whittaker processes.

Put $a_i\equiv1$ for simplicity.
If $\LL$ is zero, then 
$\boldsymbol\la(t)$ is distributed according to 
\begin{align}\label{Macd_process}
	\prob\big(\boldsymbol\la(t)\big)=\frac{1}{Z}
	P_{\la^{(1)}}(1)P_{\la^{(2)}/\la^{(1)}}(1)
	\ldots 
	P_{\la^{(N)}/\la^{(N-1)}}(1)Q_{\la^{(N)}}(\rho_{\RR t}),
\end{align}
where each $\la^{(k)}=(\la^{(k)}_{1}\ge \ldots\ge \la^{(k)}_{k})\in\Z^{k}$ 
is an ordered collection of nonnegative integers, 
$P$ and $Q$ are the (ordinary and skew) Macdonald symmetric functions \cite{Macdonald1995},
and $\rho_{\RR t}$ is the so-call Plancherel specialization 
of $Q_{\la^{(N)}}$, e.g., see \cite[\S2.2.1]{BorodinCorwin2011Macdonald}.
The Plancherel specialization may be defined, e.g., in terms of the 
generating function for the \emph{one-row} Macdonald $Q$ functions
(i.e., functions indexed by ordered $k$-tuples of integers with $k=1$):
\begin{align}\label{one_sided_Plancherel}
	\sum_{n\ge0}Q_{(n)}(\rho_t)u^{n}=e^{t u}.
\end{align}

On the other hand, for $\RR=0$, the distribution of $-\boldsymbol\la(t)$
(this simply means negating all components 
of the interlacing array)
is described by the Macdonald process 
\eqref{Macd_process} (with $\RR t$ replaced by $\LL t$ in the 
Plancherel specialization of $Q_{-\la^{(N)}}$).

In the general case when $\LL$ and $\RR$ are both positive, 
we expect that the distribution of $\boldsymbol\la(t)$ (started from the
packed initial configuration) is given by a certain
\emph{two-sided} version of a Macdonald process.
This two-sided version should necessarily have the form
\begin{align}\label{two_sided_Macd}
	\prob\big(\boldsymbol\la(t)\big)=\frac{1}{Z}
	P_{\la^{(1)}}(1)P_{\la^{(2)}/\la^{(1)}}(1)
	\ldots 
	P_{\la^{(N)}/\la^{(N-1)}}(1)
	\mathcal{M}_{N}^{(\RR t;\LL t)}(\la^{(N)})
\end{align}
for a suitable nonnegative function 
$\mathcal{M}_{N}^{(\RR t;\LL t)}$ on the $N$th floor
(cf. \eqref{Macd_process}). Note that here
the coordinates $\la^{(k)}_j$ can be positive or negative 
(but still must interlace).

Indeed, the product of the $P$ functions,
$P_{\la^{(1)}}(1)P_{\la^{(2)}/\la^{(1)}}(1)
\ldots 
P_{\la^{(N)}/\la^{(N-1)}}(1)$,
corresponds to a certain \emph{Gibbs} property
of Macdonald processes 
(see \cite{BorodinPetrov2013NN} for more detail)
which is preserved by 
both the dynamics with $\LL=0$ or $\RR=0$,
and thus also by the dynamics with general positive
$\RR$ and $\LL$ (this is because the Markov generator
of the latter process is a linear combination 
of the two ``pure'' right and left generators).

When $N=1$, the measure \eqref{two_sided_Macd}
is simply the convolution of the two ``pure'' one-sided measures
(note that $P_{\la^{(1)}}(1)=1$), and so
the generating function for 
$\mathcal{M}_{1}^{(\RR t;\LL t)}$
takes the form (cf. \eqref{Pi_t})
\begin{align}\label{two_sided_Plancherel}
	\sum_{n\in\Z}\mathcal{M}_{1}^{(\RR t;\LL t)}(n)
	u^{n}=e^{t(\RR u+\LL u^{-1})}.
\end{align}

Note that in the one-sided case, 
the one-row functions
$Q_{(n)}$ generate the algebra of symmetric functions
to which all the $Q_{\la}$'s (with $\la$ having nonnegative parts) belong.
Thus, identity \eqref{one_sided_Plancherel} defines $Q_{\la}(\rho_t)$ for all $\la$,
and one can proceed to the definition of the one-sided Macdonald processes.
In the two-sided case, 
it is not clear what algebraic
structures are responsible for the passage from 
$\mathcal{M}_{1}^{(\RR t;\LL t)}(n)$
(viewed as 
one-row functions
$Q_{(n)}
(\rho_{\RR t;\,\LL t}^{\text{two-sided}})$)
to the functions $Q_\la$ with $\la$ general.
Therefore, at this point we are left to view 
\eqref{two_sided_Macd} as a \emph{definition}
of the two-sided Plancherel specialization of the general
Macdonald symmetric functions
$Q_{\la^{(N)}}(\rho_{\RR t;\,\LL t}^{\text{two-sided}})
:=\mathcal{M}_{N}^{(\RR t;\LL t)}(\la^{(N)})$
corresponding to not necessarily one-row $\la$'s.
We do not further develop the theory of two-sided Macdonald processes
in the present paper, but note that
the desire to understand the distribution of the 
two-sided dynamics on two-dimensional interlacing
integer arrays \eqref{two_sided_Macd},
as well as the question of proving 
Conjecture \ref{claim:Fredholm},
provide some motivation for these 
objects.\footnote{The $q=0$ version of the two-sided Macdonald
processes (i.e., the two-sided Schur processes) 
was introduced and investigated in 
\cite{Borodin2010Schur}.}


\section{Formal scaling limit as $q\nearrow1$} 
\label{sec:formal_scaling_limit_as_}

Consider the scaling of the two-dimensional dynamics described by 
\cite[Thm. 4.1.21]{BorodinCorwin2011Macdonald}:
\begin{align*}&
	q=e^{-\varepsilon},\qquad
	t=\varepsilon^{-2}\tau,\qquad
	a_k=e^{-\varepsilon\mathsf{a}_k},& k=1,\ldots,N;
	\\&	
	\la^{(k)}_{j}=
	C(\varepsilon;\tau)-(k+1-2j)\varepsilon^{-1}\log\varepsilon
	+G^{(k)}_{j}\varepsilon^{-1},&
	k=1,\ldots,N,\ j=1,\ldots,k.
\end{align*}
Here $\tau>0$ is the scaled time, 
$C(\varepsilon;\tau)$ represents the global shift of the 
coordinate system,
and $(\mathsf{a}_1,\ldots,\mathsf{a}_N)$
are the scaled values of the $a_j$'s.
In the one-sided setting, the Macdonald processes 
\eqref{Macd_process}
converge under this scaling with $C(\varepsilon;\tau)=\varepsilon^{-2}\tau$ 
to Whittaker processes introduced in \cite{Oconnell2009_Toda},
see also \cite[Ch. 4]{BorodinCorwin2011Macdonald}.

As explained in 
\cite[\S 4.1 and \S 5.2]{BorodinCorwin2011Macdonald} and
\cite[\S 8.4]{BorodinPetrov2013NN},
the $q$-TASEP and the $q$-PushTASEP (i.e., the ``pure'' dynamics
corresponding to $\LL=0$ or $\RR=0$)
under this scaling with $C(\varepsilon;\tau)=+\varepsilon^{-2}\tau$
or $C(\varepsilon;\tau)=-\varepsilon^{-2}\tau$, respectively, correspond to 
stochastic differential equations
(SDEs) which describe evolution of the 
hierarchy of the 
free energies of the O'Connell--Yor 
semi-discrete directed polymer 
\cite{OConnellYor2001}, \cite{Oconnell2009_Toda}.\footnote{These are our quantities
$G^{(k)}_{k}$ in the description of the scaling.}
These free energies may also be represented 
as logarithms of 
solutions to the semi-discrete stochastic heat equation
\begin{align}\label{SHE}
	du_j(t)=u_{j-1}(t)-u_j(t)+u_j(t)dB_j(t),\qquad
	j=1,\ldots,N;\qquad
	\qquad
	u(0,N)=\delta_{1N},
\end{align}
where $B_1,\ldots,B_N$ are independent standard Brownian motions (possibly with linear drifts).

Let us now discuss the formal scaling limit of the
two-sided
($q$-PushASEP) evolution, i.e., with $\RR,\LL>0$.
Let us scale the $\RR$ and $\LL$ parameters around 1:
\begin{align*}
	\RR=e^{-\varepsilon \mathsf{r}},\qquad
	\LL=e^{-\varepsilon \mathsf{l}},
\end{align*}
where $\mathsf{r}, \mathsf{l}\in\R$ are the scaled values.
Moreover, one should take the global shift 
$C(\varepsilon;\tau)$ to be zero (one should think that the 
shifts $\pm\varepsilon^{-2}\tau$ corresponding to the 
``pure'' right and left dynamics compensate each other).

We will focus only on the 
leftmost particles $\la^{(k)}_{k}$, the whole array
can be considered in a similar way.
The limiting SDEs for the quantities 
$G^{(k)}_{k}$ look as (with the agreement that $G^{(0)}_{0}\equiv 0$)
\begin{align}\label{qToda}
	dG^{(k)}_{k}=\sqrt 2 \cdot dW_{k}
	+
	\left(
	-2\mathsf{a}_k+\mathsf{l}
	-\mathsf{r}-e^{G^{(k)}_{k}-G^{(k-1)}_{k-1}}
	\right)d\tau,\qquad k=1,\ldots,N.
\end{align}
Here $W_1,\ldots,W_N$ 
are independent standard driftless Brownian motions.
\begin{remark}
	The $G^{(k)}_{k}$'s 
	satisfying \eqref{qToda}
	can also be formally interpreted 
	as logarithms of
	solutions to the semi-discrete stochastic heat equation
	\eqref{SHE}.
	The terms $(-2\mathsf{a}_k+\mathsf{l}
	-\mathsf{r})$ are absorbed into
	drifts of the Brownian motions $B_1,\ldots,B_N$
	in \eqref{SHE}.
\end{remark}
Calculations leading to \eqref{qToda} are analogous to what is done in
\cite[\S 5.4.4]{BorodinCorwin2011Macdonald} and
\cite[\S 8.4.4]{BorodinPetrov2013NN}.
First, note that our scaling dictates
\begin{align}\label{G_la_scaling}
	G^{(k)}_{k}(\tau+d\tau)-G^{(k)}_{k}(\tau)=\frac{\la^{(k)}_{k}(\tau+\varepsilon^{-2}d\tau)
	-\la^{(k)}_{k}(\tau)}{\varepsilon^{-1}}.
\end{align}
Right jumps of the particle $\la^{(k)}_{k}$
occur with probability 
\begin{align}\label{SHE_1}
	\RR a_k(1-q^{\la^{(k-1)}_{k-1}-\la^{(k)}_{k}})
	=1-\varepsilon(\mathsf{a}_k+\mathsf{r}
	+ e^{G^{(k)}_{k}-G^{(k-1)}_{k-1}})+O(\varepsilon^{2}).
\end{align}
Left jumps happen at rate 
\begin{align}\label{SHE_2}
	\LL a_k^{-1}=e^{-\varepsilon(\mathsf{l}-\mathsf{a}_k)}
	=1-(\mathsf{l}-\mathsf{a}_k)\varepsilon+O(\varepsilon^{2}),
\end{align}
and, moreover, the particle $\la^{(k)}_{k}$ is pushed to the left 
by $\la^{(k-1)}_{k-1}$ with probability $\varepsilon e^{G^{(k)}_{k}-G^{(k-1)}_{k-1}}+O(\varepsilon^{2})$.
One should multiply this probability by the change in the position of
$\la^{(k-1)}_{k-1}$ during time interval $t=\varepsilon^{-2}\tau$, this yields 
\begin{align}\label{SHE_3}
	\Big(\varepsilon e^{G^{(k)}_{k}-G^{(k-1)}_{k-1}}+O(\varepsilon^{2})\Big)
	\Big(
	\varepsilon^{-1}
	\big(
	G^{(k-1)}_{k-1}(\tau+d\tau)
	-G^{(k-1)}_{k-1}(\tau)
	\big)
	\Big)=O(1).
\end{align}
The constant factors in \eqref{SHE_1} and
\eqref{SHE_2} give rise to the change
in the position of $\la^{(k)}_{k}$
(during time interval $\varepsilon^{-2}d\tau$)
equal to the difference of two independent 
Poisson random variables with mean $\varepsilon^{-2}d\tau$.
In view of \eqref{G_la_scaling}, these summands correspond to 
the differential of the Brownian motion $\sqrt 2 \cdot dW_{k}(\tau)$.
The summands of order $\varepsilon$ in 
\eqref{SHE_1}--\eqref{SHE_2} give rise to constant terms.
The constant term \eqref{SHE_3} is multiplied by $\frac{1}{\varepsilon^{-1}}$, 
and thus vanishes. 


\providecommand{\bysame}{\leavevmode\hbox to3em{\hrulefill}\thinspace}
\providecommand{\MR}{\relax\ifhmode\unskip\space\fi MR }
\providecommand{\MRhref}[2]{%
  \href{http://www.ams.org/mathscinet-getitem?mr=#1}{#2}
}
\providecommand{\href}[2]{#2}


\begin{thebibliography}{10}

\bibitem{alimohammadi1999two}
M.~Alimohammadi, V.~Karimipour, and M.~Khorrami, \emph{{A two-parametric family
  of asymmetric exclusion processes and its exact solution}}, Journal of
  statistical physics \textbf{97} (1999), no.~1-2, 373--394,
  arXiv:cond-mat/9805155.

\bibitem{Balasz_Komjathy_Seppalainen}
M.~Bal\'asz, J.~Komj\'athy, and T.~Sepp{\"a}l{\"a}inen, \emph{{Microscopic
  concavity and fluctuation bounds in a class of deposition processes}}, Ann.
  Inst. H. Poincar\'e B \textbf{48} (2012), 151--187.

\bibitem{Bethe1931}
H.~Bethe, \emph{{Zur Theorie der Metalle. I. Eigenwerte und Eigenfunktionen der
  linearen Atomkette. (On the theory of metals. I. Eigenvalues and
  eigenfunctions of the linear atom chain)}}, Zeitschrift fur Physik
  \textbf{71} (1931), 205--226.

\bibitem{Borodin2010Schur}
A.~Borodin, \emph{{Schur dynamics of the Schur processes}}, Advances in
  Mathematics \textbf{228} (2011), no.~4, 2268--2291, arXiv:1001.3442
  [math.CO].

\bibitem{BorodinCorwin2013discrete}
A.~Borodin and I.~Corwin, \emph{{Discrete time q-TASEPs}}, Intern. Math.
  Research Notices (2013), arXiv:1305.2972 [math.PR], doi: 10.1093/imrn/rnt206.

\bibitem{BorodinCorwin2011Macdonald}
\bysame, \emph{Macdonald processes}, Prob. Theory Rel. Fields \textbf{158}
  (2014), 225--400, arXiv:1111.4408 [math.PR].

\bibitem{BorodinCorwinFerrariVeto2013}
A.~Borodin, I.~Corwin, P.~Ferrari, and B.~Veto, \emph{{Height fluctuations for
  the stationary KPZ equation}},  (2014), arXiv:1407.6977 [math.PR].

\bibitem{BCGS2013}
A.~Borodin, I.~Corwin, V.~Gorin, and S.~Shakirov, \emph{{Observables of
  Macdonald processes}},  (2013), arXiv:1306.0659 [math.PR].

\bibitem{BorodinCorwinPetrovSasamoto2013}
A.~Borodin, I.~Corwin, L.~Petrov, and T.~Sasamoto, \emph{{Spectral theory for
  the q-Boson particle system}}, Compositio Mathematica \textbf{151} (2015),
  no.~1, 1--67, arXiv:1308.3475 [math-ph].

\bibitem{BorodinCorwinSasamoto2012}
A.~Borodin, I.~Corwin, and T.~Sasamoto, \emph{{From duality to determinants for
  q-TASEP and ASEP}}, Ann. Probab. \textbf{42} (2014), no.~6, 2314--2382,
  arXiv:1207.5035 [math.PR].

\bibitem{BorFerr08push}
A.~Borodin and P.~Ferrari, \emph{{Large time asymptotics of growth models on
  space-like paths I: PushASEP}}, Electron. J. Probab. \textbf{13} (2008),
  1380--1418, arXiv:0707.2813 [math-ph].

\bibitem{BorodinPetrov2013NN}
A.~Borodin and L.~Petrov, \emph{{Nearest neighbor Markov dynamics on Macdonald
  processes}},  (2013), arXiv:1305.5501 [math.PR], to appear in Adv. Math.

\bibitem{Calabrese_LeDoussal_Rosso}
P.~Calabrese, P.~Le~Doussal, and A.~Rosso, \emph{{Free-energy distribution of
  the directed polymer at high temperature}}, Euro. Phys. Lett. \textbf{90}
  (2010), no.~2, 20002.

\bibitem{ODEs}
E.A. Coddington and N.~Levinson, \emph{{Theory of Ordinary Differential
  Equations}}, McGraw Hill, 1955.

\bibitem{Dotsenko}
V.~Dotsenko, \emph{{Replica Bethe ansatz derivation of the Tracy-Widom
  distribution of the free energy fluctuations in one-dimensional directed
  polymers}}, Journal of Statistical Mechanics: Theory and Experiment (2010),
  no.~07, P07010, arXiv:1004.4455 [cond-mat.dis-nn].

\bibitem{Ethier1986}
S.N. Ethier and T.G. Kurtz, \emph{Markov processes: {C}haracterization and
  convergence}, Wiley-Interscience, New York, 1986.

\bibitem{Liggett1999}
T.~Liggett, \emph{{Stochastic Interacting Systems: Contact, Voter and Exclusion
  Processes}}, {Grundlehren de mathematischen Wissenschaften}, vol. 324,
  Springer, 1999.

\bibitem{Liggett1985}
\bysame, \emph{{Interacting Particle Systems}}, Springer-Verlag, Berlin, 2005.

\bibitem{Macdonald1995}
I.G. Macdonald, \emph{Symmetric functions and {H}all polynomials}, 2nd ed.,
  Oxford University Press, 1995.

\bibitem{MatveevPetrov2014}
K.~Matveev and L.~Petrov, \emph{{$q$-deformed Robinson--Schensted--Knuth
  correspondences and discrete time $q$-TASEPs}},  (2014), In preparation.

\bibitem{Oconnell2009_Toda}
N.~O'Connell, \emph{{Directed polymers and the quantum Toda lattice}}, Ann.
  Probab. \textbf{40} (2012), no.~2, 437--458, arXiv:0910.0069 [math.PR].

\bibitem{OConnellPei2012}
N.~O'Connell and Y.~Pei, \emph{{A q-weighted version of the Robinson-Schensted
  algorithm}}, Electron. J. Probab. \textbf{18} (2013), no.~95, 1--25,
  arXiv:1212.6716 [math.CO].

\bibitem{OConnellYor2001}
N.~O'Connell and M.~Yor, \emph{{Brownian analogues of Burke's theorem}},
  Stochastic Processes and their Applications \textbf{96} (2001), no.~2,
  285--304.

\bibitem{Povolotsky2013}
A.~Povolotsky, \emph{{On integrability of zero-range chipping models with
  factorized steady state}}, J. Phys. A \textbf{46} (2013), 465205.

\bibitem{Povolotsky_Mendes_2006}
A.~Povolotsky and J.F.F. Mendes, \emph{{Bethe ansatz solution of discrete time
  stochastic processes with fully parallel update}}, Journal of Statistical
  Physics \textbf{123} (2006), no.~1, 125--166, arXiv:cond-mat/0411558
  [cond-mat.stat-mech].

\bibitem{SasamotoWadati1998}
T.~Sasamoto and M.~Wadati, \emph{{Exact results for one-dimensional totally
  asymmetric diffusion models}}, J. Phys. A \textbf{31} (1998), 6057--6071.

\bibitem{Spitzer1970}
F.~Spitzer, \emph{{Interaction of Markov processes}}, Adv. Math. \textbf{5}
  (1970), no.~2, 246--290.

\end{thebibliography}
\end{document}